\def\N{\mathbb{N}}
\def\R{\mathbb{R}}
\def\C{C^{\infty}(M,\mathbb{R})}
\def\c{C^{\infty}(M,\mathbb{C})}
\newtheorem{definition}{Definition}[section]
\newtheorem{proposition}[definition]{Proposition}
\newtheorem{theorem}[definition]{Theorem}
\newtheorem{remark}[definition]{Remark}
\newenvironment{proof}{\noindent{\bf Proof.}}{\hfill $\square$}
\begin{document}

\title{On the geometric quantization of twisted Poisson manifolds}
\author{Fani Petalidou\\ Faculty of Sciences and Technology
\\University of Peloponnese \\22100 Tripoli - Greece\\ \vspace{5mm} {\small E-mail: petalido@uop.gr}}

\date{}
\maketitle

\begin{abstract}
We study the geometric quantization process for twisted Poisson
manifolds. First, we introduce the notion of Lichnerowicz-twisted
Poisson cohomology for twisted Poisson manifolds and we use it in
order to characterize their prequantization bundles and to
establish their prequantization condition. Next, we introduce a
polarization and we discuss the quantization problem. In each
step, several examples are presented.
\end{abstract}

\vspace{3mm}

\noindent {\bf{Keywords: }}{Twisted Poisson manifold, geometric
quantization.}

\vspace{3mm}

\noindent {\bf{A.M.S. classification (2000):}} 53D50, 53D17.

\section{Introduction}
\emph{Geometric quantization} is a useful procedure, founded in
differential geometry, that allows us to understand the relation
between classical and quantum mechanics by associating a quantum
system to each classical system. This process consists of
attaching to each classical system a complex Hilbert space and to
each classical observable on the phase space of the classical
system a quantum observable, i.e., a Hermitian operator on the
Hilbert space, in such a way that the Poisson bracket of two
classical observables is attached, up to a purely imaginary
constant, with the commutator of the operators. It is completed in
two steps: \emph{(i) the prequantization} and \emph{(ii) the
quantization}. If $M$ is the phase space of the classical system
equipped with a symplectic structure $\omega$, at the first step,
one associates to $M$ a Hermitian line bundle $\pi : K \to M$ with
a Hermitian connection having as curvature form the symplectic
form $\omega$. $K$ is called the \emph{prequantization bundle} of
$(M,\omega)$ and exists under the prequantization condition:
\emph{The cohomology class of $\omega$ is integral}. Then, the
Poisson Lie algebra $(\C, \{\cdot,\cdot\})$ acts faithfully on the
space of cross sections $\Gamma(K)$ of $\pi : K \to M$. At the
second step, imposing a \emph{polarization}, one constructs the
Hilbert space $\mathcal{H}$ used in quantum mechanics out of
$\Gamma(K)$ and one restricts the problem to a suitable Lie
subalgebra of $(\C, \{\cdot,\cdot\})$ that is represented
irreducibly on $\mathcal{H}$. For a short introductive
presentation of the subject, we can consult \cite{mrl-q}. For a
more extensive, but without too much detail, treatment of the
problem, we suggest \cite{bw} where we can find a complete guide
to the literature. We also refer, as standard references, the
books \cite{sn} and \cite{wd}.

The theory of geometric quantization was first developed for
symplectic manifolds by B. Kostant \cite{kst} and J.M. Souriau
\cite{sou}, independently. Their approaches are different, but
equivalent, and they have important applications. Later, it was
extended by J. Huebschmann \cite{hbs} to Poisson algebras and by
I. Vaisman \cite{vai} (see, also \cite{vai-b}) to Poisson
manifolds. In \cite{hbs}, the geometric quantization of Poisson
manifolds appears as a particular case of the geometric
quantization of Poisson algebras, while, in \cite{vai}, this
quantization is presented straightforwardly, using usual
differential geometric techniques. In \cite{lmp}, Kostant's theory
was adapted by M. de Le\'{o}n \textit{et al}. for Jacobi manifolds
and, recently, by A. Weinstein and M. Zambon \cite{wz} for Dirac
manifolds.

The purpose of the present paper is to study the geometric
quantization problem for \emph{twisted Poisson manifolds}. A such
manifold $M$ is equipped with a bivector field $\Lambda$ of which
the Schouten bracket with itself is equal to the image by
$\Lambda^{\sharp}$ of a closed $3$-form $\varphi$ on $M$. These
manifolds were introduced by P. \v{S}evera and A. Weinstein in
\cite{sw}, under the name \emph{Poisson manifolds with $3$-form
background}, stimulated by the works of J.S. Park \cite{p}, L.
Cornalba and R. Schiappa \cite{cs}, C. Klim\v{c}\'{i}k and T.
Str\"{o}bl \cite{kl} on deformation quantization and string theory
in which such $3$-forms played an important role. In order to
understand the role of $\varphi$ on a twisted Poisson manifold
$(M,\Lambda)$, one introduces on the space $\C$ of the real smooth
functions on $M$ the bracket $\{f,g\}=\Lambda(df,dg)$ and one
looks its Jacobi identity which is true up to an extra term
involving $\varphi$. Thus, $(\C, \{\cdot,\cdot\})$ is no longer a
Lie algebra. This result has an essential influence on the
prequantization procedure of a twisted Poisson manifold as is
explained in paragraph \ref{sec-prequant}.

The paper is organized as follows. In section \ref{sec-tp} we
recall the definition of a twisted Poisson manifold, we give some
main examples of such manifolds and we introduce the
Lichnerowicz-twisted Poisson cohomology. In section
\ref{sec-tp-chern}, the notion of twisted Poisson-Chern class of a
complex line bundle over a twisted Poisson manifold is defined by
using the concept of contravariant derivative given by I. Vaisman
in \cite{vai}. Section \ref{sec-prequant} is devoted to the
formulation of the integrality prequantization condition of a
twisted Poisson manifold. Several interesting examples are
discussed. Finally, in section \ref{sec-quant}, we develop the
quantization process of a twisted Poisson manifold by introducing
a polarization and we present a computational example.

We mention that the deformation quantization of twisted Poisson
structures is discussed in the papers \cite{s} and \cite{abjs}
with physical motivation. Also, we note that A. Weinstein and P.
Xu developed in \cite{wx} an alternative approach to the
quantization problem of Poisson manifolds by using symplectic
groupoids. We believe that we can extend their method to twisted
Poisson manifolds by using twisted symplectic groupoids that are
introduced in \cite{cx} by A. Cattaneo and P. Xu. We postpone this
study to a future paper.

\section{Twisted Poisson manifolds}\label{sec-tp}
A \emph{twisted Poisson manifold} is a differentiable manifold $M$
equipped with a bivector field $\Lambda$ and a closed $3$-form
$\varphi$ on $M$, called the \emph{background $3$-form}, such that
\begin{equation}\label{def-tw-Poisson}
\frac{1}{2}[\Lambda,\Lambda]=\Lambda^{\sharp}(\varphi).
\end{equation}
In the above formula, $[\cdot,\cdot]$ denotes the Schouten bracket
and $\Lambda^{\sharp}$ is the natural extension of $\Lambda^{\sharp}
: \Gamma(T^*M)\to \Gamma(TM)$, given, for all $\alpha,\beta \in
\Gamma(T^*M)$, by
\begin{equation}\label{bnd-map}
\langle \beta, \Lambda^{\sharp}(\alpha)\rangle =
\Lambda(\alpha,\beta),
\end{equation}
to a homomorphism from $\Gamma(\bigwedge^kT^*M)$ to
$\Gamma(\bigwedge^kTM)$, $k\in \N$, defined, for all $\eta \in
\Gamma(\bigwedge^k T^*M)$ and $\alpha_1,\ldots,\alpha_k \in
\Gamma(T^*M)$, by
\begin{equation}\label{formule-homo}
\Lambda^{\sharp}(\eta)(\alpha_1,\ldots,\alpha_k ) =
(-1)^k\eta(\Lambda^{\sharp}(\alpha_1),\ldots,\Lambda^{\sharp}(\alpha_k
))
\end{equation}
and, for any $f\in \C$, by $\Lambda^{\sharp}(f)=f$. In the
following, a twisted Poisson manifold will be denoted by the
triple $(M,\Lambda,\varphi)$.

\subsection{Examples of twisted Poisson manifolds}\label{examples-tw-Poisson}
\emph{1) Poisson manifolds:} Let $(M,\Lambda)$ be a Poisson
manifold, i.e., $[\Lambda,\Lambda]=0$, and $\varphi$ a closed
$3$-form on $M$ satisfying $\Lambda^{\sharp}(\varphi)=0$. Then,
$(M,\Lambda,\varphi)$ is a twisted Poisson manifold. This happens
for $3$-dimensional Poisson manifolds. Since
$\mathrm{Im}\Lambda^{\sharp}$ defines a foliation of $M$ whose the
leaves are of dimension 0 or 2, we have that any three sections of
$\mathrm{Im}\Lambda^{\sharp}$ are linearly dependent on $M$. Thus,
any $3$-form $\varphi$ on $M$ is closed and
$\Lambda^{\sharp}(\varphi)=0$.

\vspace{2mm}

\noindent \emph{2) Twisted Poisson manifolds associated to
symplectic manifolds I:} Let $(M_0,\omega_0)$ be a symplectic
manifold of dimension $2n$, $n\geq 2$, and $\Lambda_0$ the unique
bivector field on $M_0$ given, for all $\alpha\in \Gamma(T^*M_0)$,
by $i(\Lambda_0^{\sharp}(\alpha))\omega_0 = -\alpha$, i.e.,
$\Lambda_0=\Lambda_0^{\sharp}(\omega_0)$. Then, for any non
constant function $f\in C^{\infty}(M_0,\mathbb{R})$, the bivector
field $\Lambda = f\Lambda_0$ and the closed $3$-form $\varphi =
-f^{-2}\omega_0\wedge df$ define a twisted Poisson structure on
$M_0$. In fact, by a simple computation, we find
\begin{eqnarray*}
\frac{1}{2}[\Lambda,\Lambda] & = &
\frac{1}{2}[f\Lambda_0,f\Lambda_0] = -(f\Lambda_0)\wedge
\Lambda_0^{\sharp}(df) \\
& = & (f\Lambda_0)^{\sharp}(-f^{-2}\omega_0\wedge
df)=\Lambda^{\sharp}(\varphi).
\end{eqnarray*}

\vspace{2mm}

\noindent \emph{3) Twisted Poisson manifolds associated to
symplectic manifolds II:} Let $(M_0,\omega_0)$ be a
$2n$-dimensional symplectic manifold with $n\geq 2$ and
$\Lambda_0$ the nondegenerate Poisson structure defined by
$\omega_0$ as in Example 2. Then, the triple
$(M,\Lambda,\varphi)$, where $M=M_0\times \R$,
$$
\Lambda = e^t(\Lambda_0 + \Lambda_0^{\sharp}(df)\wedge
\frac{\partial}{\partial t}) \quad \mathrm{and} \quad \varphi =
-e^{-t}\omega_0\wedge dt,
$$
$t$ being the canonical coordinate on $\R$ and $f\in
C^{\infty}(M_0,\R)$, is a twisted Poisson manifold. We have
\begin{eqnarray*}
\frac{1}{2}[\Lambda,\Lambda] & = & \frac{1}{2}[e^t(\Lambda_0 +
\Lambda_0^{\sharp}(df)\wedge \frac{\partial}{\partial t}),\,
e^t(\Lambda_0 + \Lambda_0^{\sharp}(df)\wedge
\frac{\partial}{\partial t})] \\
& = & e^{2t}\Lambda_0^{\sharp}(df)\wedge \Lambda_0 =
\Lambda^{\sharp}(-e^{-t}\omega_0\wedge dt) =
\Lambda^{\sharp}(\varphi).
\end{eqnarray*}

\vspace{2mm}

\noindent \emph{4) Twisted Poisson manifolds associated to Poisson
manifolds:} Let $(M,\Lambda_0,\omega)$ be a Poisson manifold
endowed with a $2$-form $\omega$ such that the operator $Id +
\omega^{\flat}\circ \Lambda_0^{\sharp} : T^*M \to T^*M$ is
invertible. Then, the vector bundle map $\Lambda^{\sharp} =
\Lambda_0^{\sharp}\circ (Id + \omega^{\flat}\circ
\Lambda_0^{\sharp})^{-1}$ defines a $(-d\omega)$-twisted Poisson
structure on $M$. (For more details, see \cite{sw}.)

\vspace{2mm}

\noindent \emph{5) Twisted Poisson structures induced by twisted
Jacobi manifolds:} Let $(M,\Lambda,E,\omega)$ be a twisted Jacobi
manifold (\cite{jf}), i.e., $M$ is a differentiable manifold
endowed with a bivector field $\Lambda$, a vector field $E$ and a
$2$-form $\omega$ such that
\begin{equation}\label{eq-jac-1}
\frac{1}{2}[\Lambda,\Lambda] +E \wedge \Lambda  =
\Lambda^{\sharp}(d\omega)+ ( \Lambda^{\sharp} \omega)\wedge E
\end{equation}
and
\begin{equation} \label{eq-jac-2}
[E,\Lambda]=  (\Lambda^{\sharp} \otimes 1)(d\omega)(E)- ((
\Lambda^{\sharp} \otimes 1)(\omega)(E)) \wedge E.
\end{equation}
In  (\ref{eq-jac-2}), $(\Lambda^{\sharp} \otimes 1)(d\omega)$ and
$(\Lambda^{\sharp} \otimes 1)(\omega)$ denote, respectively, the
sections of $(\bigwedge ^2 TM) \otimes T^*M$ and $ TM \otimes T^*M$
that act on multivector fields by contraction with the factor in
$T^*M$ (see, \cite{jf}). We consider a submanifold $M_0$ of $M$, of
codimension $1$ and transverse to $E$. Let $\varpi : U \to M_0$ be
the projection on $M_0$ of a tubular neighbourhood $U$ of $M_0$ in
$M$ such that, for any $x\in M_0$, $\varpi^{-1}(x)$ is a connected
arc of the integral curve of $E$ through $x$. If $\omega =
\varpi^{*}\omega_0$, where $\omega_0$ is a $2$-form on $M_0$, then,
the twisted Jacobi structure $(\Lambda,E,\omega)$ of $M$ induces a
twisted Poisson structure $(\Lambda_0,\varphi_0)$ on $M_0$, where
$\Lambda_0 = \varpi_*\Lambda$ and $\varphi_0 = d\omega_0$. In fact,
by projecting (\ref{eq-jac-1}) along the integral curves of $E$, we
get
$$
\frac{1}{2}[\Lambda_0,\Lambda_0] = \Lambda_0^{\sharp}(d\omega_0),
$$
while the projection of (\ref{eq-jac-2}) is annihilated identically.
(For more details, see \cite{pt}).

\subsection{The Lichnerowicz-twisted Poisson cohomology of a twisted Poisson manifold}
Let $(M,\Lambda,\varphi)$ be a twisted Poisson manifold.  As in
the case of a Poisson manifold, we introduce hamiltonian vector
fields on $M$ by setting, for any $f\in \C$,
$X_f=\Lambda^{\sharp}(df)$ and we define on $\C$ the internal
composition law
$$
\{f,g\}=\Lambda(df,dg), \hspace{5mm} f,g\in \C,
$$
that is bilinear and skew-symmetric but its Jacobi identity is
modified by $\varphi$:
$$
\{f,\{g,h\}\} +
\{g,\{h,f\}\}+\{h,\{f,g\}\}=\Lambda^{\sharp}(\varphi)(df,dg,dh).
$$
Therefore, $(\C,\{\cdot,\cdot\})$ is no longer a Lie algebra. In
this paper, we will say that it is a $\varphi$-\emph{twisted Lie
algebra}. Since the Jacobi identity is violated, we cannot, in
general, define the \emph{Chevalley-Eilenberg} cohomology of
$(\C,\{\cdot,\cdot\})$ relative to the representation defined by
the hamiltonian vector fields, i.e., to the representation given
by
$$
\C \times \C \to \C, \hspace{5mm} (f,g)\to X_f(g).
$$

However, any twisted Poisson structure $(\Lambda,\varphi)$ on $M$
produces a Lie algebroid structure on the cotangent bundle $T^*M$
of $M$, as in the ordinary case. The Lie bracket on the space of
smooth sections of $T^*M$ is given, for any $\alpha,\beta \in
\Gamma(T^*M)$, by
\begin{equation}\label{def-Lie-br}
\{\alpha,\beta\}^{\varphi}=\{\alpha,\beta\}+\varphi
(\Lambda^{\sharp}(\alpha),\Lambda^{\sharp}(\beta),\cdot),
\end{equation}
where $\{\cdot,\cdot\}$ denotes the Koszul bracket (\cite{kz})
associated to $\Lambda$, i.e.
\begin{equation}\label{def-kz}
\{\alpha,\beta\} = \mathcal{L}_{\Lambda^{\sharp}(\alpha)}\beta -
\mathcal{L}_{\Lambda^{\sharp}(\beta)}\alpha -
d\Lambda(\alpha,\beta),
\end{equation}
and characterized by $\{df,dg\}=d\{f,g\}$ and the Leibniz identity
$\{\alpha,f\beta\}=f\{\alpha,\beta\}+(\mathcal{L}_{\Lambda^{\sharp}(\alpha)}f)\beta$.
The anchor map is the vector bundle map $\Lambda^{\sharp} : T^*M \to
TM$ defined by (\ref{bnd-map}), while, the exterior derivative
operator $\partial_{\varphi}$ on $\Gamma(\bigwedge TM)$ determined
by $(\{\cdot,\cdot\}^{\varphi}, \Lambda^{\sharp})$ is defined, for
all $P\in \Gamma(\bigwedge^k TM)$ and $\alpha_1,\ldots,\alpha_{k+1}
\in \Gamma(T^*M)$, by
\begin{eqnarray*}
\partial_{\varphi}P(\alpha_1,\ldots,\alpha_{k+1}) & = &
\Sigma_{i=1}^{k+1}(-1)^{i+1}\Lambda^{\sharp}(\alpha_i)(
P(\alpha_1,\ldots,\hat{\alpha}_i,\ldots,\alpha_{k+1})) \\
&  & +\, \Sigma_{1\leq i<j\leq k+1}(-1)^{i+j} P
(\{\alpha_i,\alpha_j\}^{\varphi}, \alpha_1,\ldots,
\hat{\alpha}_i,\ldots,\hat{\alpha}_j,\ldots,\alpha_{k+1}),
\end{eqnarray*}
where the hat denotes missing arguments. Since
$\partial_{\varphi}^2=0$, $(\Gamma(\bigwedge
TM),\partial_{\varphi})$ is a chain complex.

\begin{definition} We call \emph{Lichnerowicz-twisted Poisson cohomology (L-tP
cohomology)} of $(M,\Lambda,\varphi)$ the cohomology of
$(\Gamma(\bigwedge TM),\partial_{\varphi})$. It is denoted by
$H_{L-tP}^*(M,\Lambda,\varphi)$ or, for simplicity, $H_{L-tP}^*(M)$
and, for any $k\in \N$,
$$
H_{L-tP}^k(M) = \frac{\mathrm{ker} (\partial_{\varphi} : \bigwedge^k
TM \to \bigwedge^{k+1}TM)}{\mathrm{Im} (\partial_{\varphi} :
\bigwedge^{k-1} TM \to \bigwedge^k TM)},
$$
with the convention $\bigwedge^{-1}TM=\{0\}$. The cohomology class
of any element $P\in \mathrm{ker} (\partial_{\varphi} : \bigwedge^k
TM \to \bigwedge^{k+1}TM)$ is denoted $[P]^{\varphi}$.
\end{definition}

By a simple, but long, computation, we can prove that the
homomorphism $\Lambda^{\sharp} : \Gamma(\bigwedge^* T^*M) \to
\Gamma(\bigwedge^* TM)$ is a chain map, namely,
\begin{equation}\label{type-ch-der}
\partial_{\varphi}\circ \Lambda^{\sharp} = -\Lambda^{\sharp}\circ d.
\end{equation}
Hence, we deduce
\begin{proposition}
If $H_{dR}^*(M,\R)$ is the de Rham cohomology of
$(M,\Lambda,\varphi)$, the homomorphism of complexes
$\Lambda^{\sharp} : (\Gamma(\bigwedge^* T^*M),d) \to
(\Gamma(\bigwedge^* TM),\partial_{\varphi})$ induces a
homomorphism in cohomology, also denoted by $\Lambda^{\sharp}$,
\begin{equation}\label{homo-coho}
\begin{array}{lccc}
\Lambda^{\sharp} : &  H_{dR}^*(M,\R)  & \rightarrow & H_{L-tP}^*(M) \\
& [\alpha] & \mapsto & [\Lambda^{\sharp}(\alpha)]^{\varphi}.
\end{array}
\end{equation}
If $\Lambda$ is nondegenerate, then (\ref{homo-coho}) is an
isomorphism.
\end{proposition}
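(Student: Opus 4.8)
The plan is to treat this as formal homological algebra built on the single nontrivial input (\ref{type-ch-der}), which we take as given. The first assertion is that $\Lambda^{\sharp}$ descends to cohomology. First I would verify that it carries cocycles to cocycles: if $d\alpha=0$, then (\ref{type-ch-der}) gives $\partial_{\varphi}(\Lambda^{\sharp}(\alpha))=-\Lambda^{\sharp}(d\alpha)=0$. Next, that it carries coboundaries to coboundaries: if $\alpha=d\beta$, then $\Lambda^{\sharp}(\alpha)=-\partial_{\varphi}(\Lambda^{\sharp}(\beta))$ lies in $\mathrm{Im}\,\partial_{\varphi}$. Consequently the assignment $[\alpha]\mapsto[\Lambda^{\sharp}(\alpha)]^{\varphi}$ does not depend on the chosen representative and defines a graded homomorphism $\Lambda^{\sharp}:H_{dR}^*(M,\R)\to H_{L-tP}^*(M)$; the overall minus sign in (\ref{type-ch-der}) plays no role here.

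For the isomorphism statement I would use the standard fact that a bijective chain map induces an isomorphism on cohomology. When $\Lambda$ is nondegenerate, the anchor $\Lambda^{\sharp}:T^*M\to TM$ of (\ref{bnd-map}) is a fibrewise isomorphism, so by the extension rule (\ref{formule-homo}) each $\Lambda^{\sharp}:\Gamma(\bigwedge^kT^*M)\to\Gamma(\bigwedge^kTM)$ is an isomorphism of $\C$-modules, the factor $(-1)^k$ being irrelevant to bijectivity. Denoting its inverse by $\Psi$ and composing (\ref{type-ch-der}) on the left and on the right with $\Psi$, I would obtain $\Psi\circ\partial_{\varphi}=-d\circ\Psi$, the same type of intertwining relation; hence $\Psi$ is itself a chain map and induces $\Psi_*:H_{L-tP}^*(M)\to H_{dR}^*(M,\R)$. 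Because $\Lambda^{\sharp}\circ\Psi$ and $\Psi\circ\Lambda^{\sharp}$ are the identity on forms and on multivector fields respectively, the induced maps on cohomology are mutually inverse, and $\Lambda^{\sharp}$ is an isomorphism.

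I do not expect a genuine obstacle, since all the analytic content has been packaged into (\ref{type-ch-der}). The only point requiring mild care is the bookkeeping of the minus sign: one must check that it obstructs neither the descent to cohomology nor the inversion argument. It does not, because it multiplies $\Lambda^{\sharp}$ uniformly in each degree and therefore cancels in every composition with the inverse $\Psi$.
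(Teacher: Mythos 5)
Your proposal is correct and follows exactly the route the paper intends: the paper offers no further argument beyond (\ref{type-ch-der}) (it states the proposition with ``Hence, we deduce''), treating it as the standard homological-algebra consequence of the (anti-)chain-map identity, which is precisely what you spell out, including the harmless sign and the inverse chain map $\Psi$ in the nondegenerate case.
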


\section{Twisted Poisson-Chern class of a complex line bundle over a twisted Poisson
manifold}\label{sec-tp-chern}
Let $(M,\Lambda,\varphi)$ be a
twisted Poisson manifold, $\pi : K\to M$ a complex line bundle
over $M$, $\Gamma(K)$ the space of the global cross sections of
$\pi : K\to M$ and $\mathrm{End}_{\mathbb{C}}(\Gamma(K))$ the
space of the complex linear endomorphisms of $\Gamma(K)$.

\begin{definition}
A \emph{contravariant derivative} $D$ on $\pi : K\to M$ is a
$\R$-linear mapping
$$
D : \Gamma(T^*M) \to \mathrm{End}_{\mathbb{C}}(\Gamma(K)),
$$
i.e., for any $\alpha,\beta \in \Gamma(T^*M)$ and $f\in \C$,
\begin{equation}\label{lin-D}
D_{\alpha + \beta} = D_{\alpha} + D_{\beta} \hspace{5mm}
\mathrm{and} \hspace{5mm} D_{f\alpha}=fD_{\alpha},
\end{equation}
such that
\begin{equation}\label{rel-contr-der}
D_{\alpha}(fs)=f D_{\alpha}s + (\Lambda^{\sharp}(\alpha)f)s,
\hspace{5mm} \mathrm{for}\; \mathrm{all}\;\, s\in \Gamma(K).
\end{equation}
\end{definition}

We say that $D$ is \emph{Hermitian} or \emph{compatible with a
Hermitian metric} $h$ on $\pi : K\to M$, if, for all $\alpha\in
\Gamma(T^*M)$ and $s_1,s_2 \in \Gamma(K)$,
\begin{equation}\label{def-hermitian}
\Lambda^{\sharp}(\alpha)(h(s_1,s_2)) = h(D_{\alpha}s_1,s_2) +
h(s_1,D_{\alpha}s_2).
\end{equation}
We note that such (Hermitian) operators on $\pi : K\to M$ always
exist; it suffices to consider  an arbitrary (Hermitian) connection
$\nabla$ on $\pi : K\to M$ and to put
$D_{\alpha}=\nabla_{\Lambda^{\sharp}(\alpha)}$.

\begin{definition}
The \emph{curvature} $C_D$ of a contravariant derivative $D$ on $\pi
: K\to M$ is the mapping
$$
C_D : \Gamma(T^*M) \times \Gamma(T^*M)  \to
\mathrm{End}_{\mathbb{C}}(\Gamma(K))
$$
defined, for all $\alpha, \beta \in\Gamma(T^*M)$, by
\begin{equation}\label{eq-curv}
C_D(\alpha,\beta) = D_{\alpha}\circ D_{\beta} - D_{\beta}\circ
D_{\alpha}-D_{\{\alpha,\beta\}^{\varphi}}.
\end{equation}
\end{definition}

\begin{proposition}
$C_D$ is bilinear over $\C$ and skew-symmetric, i.e.,
$$
C_D(\alpha,\beta) = - C_D(\beta, \alpha), \hspace{6mm}
\mathrm{for}\; \mathrm{all}\; \,\alpha,\beta \in \Gamma(T^*M).
$$
\end{proposition}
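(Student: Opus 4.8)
The plan is to dispose of skew-symmetry first, since it is immediate and then halves the work needed for bilinearity. First I would note that the bracket $\{\cdot,\cdot\}^{\varphi}$ is itself skew-symmetric: the Koszul bracket (\ref{def-kz}) changes sign under $\alpha\leftrightarrow\beta$ (using that $\Lambda$ is a bivector, so $d\Lambda(\beta,\alpha)=-d\Lambda(\alpha,\beta)$), while the twisting term $\varphi(\Lambda^{\sharp}(\alpha),\Lambda^{\sharp}(\beta),\cdot)$ in (\ref{def-Lie-br}) also changes sign, because $\varphi$ is a $3$-form and interchanging its first two slots produces a minus sign. Hence $\{\beta,\alpha\}^{\varphi}=-\{\alpha,\beta\}^{\varphi}$, and since $D$ is $\R$-linear we have $D_{-\gamma}=-D_{\gamma}$. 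Comparing (\ref{eq-curv}) for $(\beta,\alpha)$ with that for $(\alpha,\beta)$, each of the three summands reverses sign, giving $C_D(\beta,\alpha)=-C_D(\alpha,\beta)$.

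For bilinearity over $\C$, additivity in each argument is routine: it follows directly from the additivity of $D$ in (\ref{lin-D}) and the $\R$-bilinearity of the bracket. So the substantive point is $\C$-homogeneity, and by the skew-symmetry just established it is enough to check it in the second slot, namely $C_D(\alpha,f\beta)=f\,C_D(\alpha,\beta)$ for all $f\in\C$; the first slot then follows by applying skew-symmetry twice. The essential preliminary is the Leibniz identity for the twisted bracket,
$$
\{\alpha,f\beta\}^{\varphi}=f\{\alpha,\beta\}^{\varphi}+(\Lambda^{\sharp}(\alpha)f)\beta,
$$
which I would justify by observing that the Koszul bracket already satisfies exactly this identity, while the twisting term contributes no derivative: since $\varphi$ is tensorial and $\Lambda^{\sharp}$ is $\C$-linear, $\varphi(\Lambda^{\sharp}(\alpha),\Lambda^{\sharp}(f\beta),\cdot)=f\,\varphi(\Lambda^{\sharp}(\alpha),\Lambda^{\sharp}(\beta),\cdot)$.

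With this in hand, I would expand the three terms of (\ref{eq-curv}) evaluated on $f\beta$. Using $D_{f\beta}=fD_{\beta}$ from (\ref{lin-D}) together with the contravariant Leibniz rule (\ref{rel-contr-der}), the first term $D_{\alpha}\circ D_{f\beta}$ yields $f(D_{\alpha}\circ D_{\beta})+(\Lambda^{\sharp}(\alpha)f)D_{\beta}$; the second term $D_{f\beta}\circ D_{\alpha}$ gives simply $f(D_{\beta}\circ D_{\alpha})$; and the third term $D_{\{\alpha,f\beta\}^{\varphi}}$, via the Leibniz identity above and the $\C$-linearity of $D$, gives $fD_{\{\alpha,\beta\}^{\varphi}}+(\Lambda^{\sharp}(\alpha)f)D_{\beta}$. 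The one thing that can go wrong — and hence the main point of the proof — is the fate of the two anomalous summands $(\Lambda^{\sharp}(\alpha)f)D_{\beta}$, one coming from the Leibniz rule of $D$ and the other from the Leibniz identity of the bracket; I expect them to cancel exactly, leaving $f\,C_D(\alpha,\beta)$. This cancellation is precisely the mechanism that makes curvature tensorial in ordinary connection theory, transplanted here to the Lie algebroid $(T^*M,\{\cdot,\cdot\}^{\varphi},\Lambda^{\sharp})$, and verifying it is the only genuinely computational step.
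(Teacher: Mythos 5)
Your proposal is correct and takes essentially the same route as the paper, whose proof is only a two-line sketch stating that skew-symmetry is immediate from (\ref{eq-curv}) and that bilinearity follows from (\ref{lin-D}) and (\ref{rel-contr-der}); your argument is exactly that sketch carried out in full, supplying the two ingredients the paper leaves implicit (skew-symmetry and the Leibniz identity of $\{\cdot,\cdot\}^{\varphi}$, both inherited from the Koszul bracket since the twisting term is tensorial) and exhibiting the cancellation of the two anomalous terms $(\Lambda^{\sharp}(\alpha)f)D_{\beta}$.
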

\begin{proof} The skew-symmetry of $C_D$ is
an immediate consequence of its definition (\ref{eq-curv}). Its
bilinearity can be proved by using the linearity (\ref{lin-D}) and
the property (\ref{rel-contr-der}) of $D$.
\end{proof}

\vspace{3mm}

Thus, from the above results and the fact that $\pi : K \to M$ is
a complex line bundle over $M$, we have that there exists a
globally defined complex bivector field $\Pi = \Pi_1 + i \Pi_2$ on
$M$, with $\Pi_1, \Pi_2 \in \Gamma(\bigwedge^2 TM)$, such that,
for all $\alpha,\beta \in \Gamma(T^*M)$ and $s \in \Gamma(K)$,
\begin{equation}\label{rel-curv-Pi}
C_D(\alpha,\beta)(s) = \Pi(\alpha,\beta)s.
\end{equation}
For more details, we can consult \cite{kst} and adapt its results
in the contravariant framework.

\vspace{3mm}

We extend, by linearity, the cohomology operator
$\partial_{\varphi}$ on the complex multivector fields on $M$ by
setting, for any $P \in \Gamma(\bigwedge^k T_{\mathbb{C}}M)$, $P=P_1
+ iP_2$ with $P_1,P_2 \in \Gamma(\bigwedge^k TM)$,
$$
\partial_{\varphi}P = \partial_{\varphi}P_1 + i
\partial_{\varphi}P_2.
$$
Clearly, $\partial_{\varphi}^2 = 0$. Consequently,
$(\Gamma(\bigwedge T_{\mathbb{C}}M),\partial_{\varphi})$ is a chain
complex whose cohomology will be called the \emph{complex
Lichnerowicz-twisted Poisson cohomology of} $(M,\Lambda,\varphi)$
and will be denoted by $H_{_{\mathbb{C}}L-tP}^*(M,\Lambda,\varphi)$
or $H_{_{\mathbb{C}}L-tP}^*(M)$.

\begin{theorem}\label{three-proper}
Let $\pi : K \to M$ be a complex line bundle over a twisted Poisson
manifold $(M,\Lambda,\varphi)$, $D$ a contravariant derivative on
$\pi : K \to M$, $C_D$ the curvature of $D$ and $\Pi$ the complex
bivector field on $M$ associated to $C_D$ (\ref{rel-curv-Pi}). Then:
\begin{enumerate}
\item [(i)] $\Pi$ defines a cohomology class $[\Pi]^{\varphi}$ in
$H_{_{\mathbb{C}}L-tP}^2(M)$. \item[(ii)] $[\Pi]^{\varphi}$ does
not depend of the contravariant derivative $D$. \item[(iii)] In
the case where $D$ is compatible with a Hermitian metric $h$ on
$\pi : K \to M$, $\Pi$ is purely imaginary.
\end{enumerate}
\end{theorem}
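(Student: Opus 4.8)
The plan is to establish the three claims in order, since each relies on the previous. For (i), I must show that $\Pi$ is $\partial_\varphi$-closed, i.e.\ $\partial_\varphi \Pi = 0$ in $\Gamma(\bigwedge^3 T_{\mathbb{C}}M)$. The natural route is to exploit the curvature identity: the curvature $C_D$ of a contravariant derivative should obey a Bianchi-type identity coming from the fact that the bracket $\{\cdot,\cdot\}^\varphi$ makes $T^*M$ a Lie algebroid with $\partial_\varphi^2 = 0$. Concretely, I would write out $\partial_\varphi \Pi(\alpha,\beta,\gamma)$ using the defining formula for $\partial_\varphi$, substitute $\Pi(\alpha,\beta) = C_D(\alpha,\beta)$ acting on a section (via (\ref{rel-curv-Pi})), and then expand $C_D$ through its definition (\ref{eq-curv}) as $D_\alpha D_\beta - D_\beta D_\alpha - D_{\{\alpha,\beta\}^\varphi}$. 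After a cyclic summation over $\alpha,\beta,\gamma$, the second-order terms in $D$ should cancel by skew-symmetry, and the remaining terms should assemble into an expression that vanishes precisely because of the Jacobi identity for the Lie-algebroid bracket $\{\cdot,\cdot\}^\varphi$ (equivalently, because $\partial_\varphi^2 = 0$). This is the standard abstract Bianchi identity transported into the contravariant setting.

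For (ii), I would take two contravariant derivatives $D$ and $D'$ and study their difference. The key observation is that $D'_\alpha - D_\alpha$ is $\C$-linear in the section $s$ (the Leibniz terms $(\Lambda^\sharp(\alpha)f)s$ in (\ref{rel-contr-der}) cancel), so $D' - D$ is given by a $1$-form-valued endomorphism; since $K$ is a line bundle, this endomorphism is scalar multiplication by some complex $1$-form $\theta$, i.e.\ $D'_\alpha s = D_\alpha s + \langle\theta,\Lambda^\sharp(\alpha)\rangle s$. Substituting into (\ref{eq-curv}) and computing $C_{D'} - C_D$ should yield $\Pi' - \Pi = \pm\,\partial_\varphi(\Lambda^\sharp\theta)$ or, more precisely, something of the form $\Lambda^\sharp(d\theta)$, which by the chain-map property (\ref{type-ch-der}), $\partial_\varphi \circ \Lambda^\sharp = -\Lambda^\sharp \circ d$, is a $\partial_\varphi$-coboundary. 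Hence $[\Pi']^\varphi = [\Pi]^\varphi$ in $H^2_{_{\mathbb{C}}L\text{-}tP}(M)$.

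For (iii), I would assume $D$ is Hermitian, i.e.\ satisfies (\ref{def-hermitian}), and show $\overline{\Pi} = -\Pi$, equivalently $\Pi_1 = 0$. The standard trick is to differentiate the compatibility relation (\ref{def-hermitian}) a second time and compare: applying the curvature operator $C_D(\alpha,\beta)$ to the metric identity and using that $\Lambda^\sharp(\alpha)$ and $\Lambda^\sharp(\beta)$ are real vector fields should give $h(\Pi(\alpha,\beta)s_1, s_2) + h(s_1, \Pi(\alpha,\beta)s_2) = 0$; since $\Pi(\alpha,\beta)$ acts as a complex scalar, this forces that scalar to be purely imaginary. The main obstacle throughout is part (i): the cyclic expansion of the Bianchi identity is bookkeeping-heavy, and one must be careful that the extra $\varphi$-term in the bracket (\ref{def-Lie-br}) is correctly absorbed so that closedness holds with respect to $\partial_\varphi$ rather than the untwisted Poisson differential. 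I would verify closedness first on exact generators $\alpha = df$, $\beta = dg$, $\gamma = dh$, where $\{df,dg\}^\varphi$ simplifies, and then extend by $\C$-linearity using the tensoriality of $C_D$ established in the preceding Proposition.
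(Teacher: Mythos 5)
Your part (ii) contains a genuine gap. You parametrize the difference of two contravariant derivatives as $D'_{\alpha}s - D_{\alpha}s = \langle \theta, \Lambda^{\sharp}(\alpha)\rangle s$ for a complex $1$-form $\theta$. That normal form is too restrictive. The difference does kill the Leibniz term, so it is tensorial in $s$ and linear over smooth functions in $\alpha$ (by (\ref{lin-D})); since $K$ is a line bundle it is therefore multiplication by a scalar depending function-linearly on $\alpha$. But the object dual to $1$-forms is a \emph{vector field}, so the correct statement is $D'_{\alpha}s - D_{\alpha}s = \langle \alpha, \hat{X}\rangle s$ for a globally defined complex vector field $\hat{X}$ --- which is exactly the paper's argument. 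Your ansatz corresponds to the special case $\hat{X} = -\Lambda^{\sharp}(\theta) \in \mathrm{Im}\,\Lambda^{\sharp}$, and when $\Lambda$ is degenerate (the typical twisted Poisson situation) this misses most contravariant derivatives: the paper's own prequantization operator (\ref{D-connection}), $D_{\alpha}s = \nabla_{\Lambda^{\sharp}(\alpha)}s + 2\pi i \langle \alpha, Z\rangle s$, differs from $\nabla_{\Lambda^{\sharp}(\cdot)}$ by a vector field $Z$ that need not lie in $\mathrm{Im}\,\Lambda^{\sharp}$. As written, your argument proves independence of $[\Pi]^{\varphi}$ only within a proper subclass of contravariant derivatives. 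The repair is immediate and in fact shortens your route: with the correct $\hat{X}$, expanding (\ref{eq-curv}) gives $C_{D'}(\alpha,\beta) - C_D(\alpha,\beta) = \Lambda^{\sharp}(\alpha)\langle \beta,\hat{X}\rangle - \Lambda^{\sharp}(\beta)\langle \alpha,\hat{X}\rangle - \langle \{\alpha,\beta\}^{\varphi},\hat{X}\rangle = \partial_{\varphi}\hat{X}(\alpha,\beta)$, so $\Pi' = \Pi + \partial_{\varphi}\hat{X}$ directly, with no appeal to the chain-map property (\ref{type-ch-der}).

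Parts (i) and (iii) are correct, and both take genuinely different routes from the paper. The paper rests everything on one local computation: for a nowhere-vanishing local section $s$ it writes $D_{\alpha}s = \langle \alpha, X\rangle s$ and proves the potential identity $\Pi = \partial_{\varphi}X$; closedness then follows from $\partial_{\varphi}^2 = 0$, and (iii) follows by evaluating (\ref{def-hermitian}) on a local orthonormal section to get $X + \bar{X} = 0$. Your (i) is instead the abstract Bianchi identity: the cyclic operator identity $[D_{\alpha},[D_{\beta},D_{\gamma}]] + [D_{\beta},[D_{\gamma},D_{\alpha}]] + [D_{\gamma},[D_{\alpha},D_{\beta}]] = 0$ combined with the Jacobi identity of $\{\cdot,\cdot\}^{\varphi}$, which is indeed available because the twisted bracket (\ref{def-Lie-br}) is a genuine Lie algebroid bracket (this is the point where the $\varphi$-term is ``absorbed,'' as you anticipate). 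Your (iii) applies the curvature to the compatibility relation, using the anchor property $[\Lambda^{\sharp}(\alpha),\Lambda^{\sharp}(\beta)] = \Lambda^{\sharp}(\{\alpha,\beta\}^{\varphi})$, to force $\Pi(\alpha,\beta) + \overline{\Pi(\alpha,\beta)} = 0$. These are global, trivialization-free arguments and they stand on their own; what the paper's local-potential method buys is economy --- the single identity $\Pi = \partial_{\varphi}X$ delivers all three claims at once, including the part (ii) that your proposal handles incorrectly.
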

\begin{proof}
(i) Let $s$ be a nowhere vanishing local section of $\pi : K \to M$.
Since the complex dimension of the fibre of $\pi : K \to M$ is $1$,
we may associate to $s$ a unique complex local vector field on $M$
as follows. It is clear that, for any $1$-form $\alpha$ on $M$,
$\frac{D_{\alpha}s}{s}$ is a complex function on $M$ and the
application $\alpha \mapsto \frac{D_{\alpha}s}{s}$ is
$\mathbb{C}$-linear (\ref{lin-D}). Hence, there exists a unique
complex local vector field $X = X_1+iX_2$ on $M$, with $X_1,X_2$
local real vector fields on $M$, such that, for all $\alpha \in
\Gamma(T^*M)$,
\begin{equation}\label{D-X}
D_{\alpha}s = \langle \alpha, X\rangle s.
\end{equation}
We have that
\begin{equation}\label{Pi-X}
\Pi = \partial_{\varphi}X.
\end{equation}
Effectively, for all $\alpha,\beta \in \Gamma(T^*M)$,
\begin{eqnarray*}
\Pi(\alpha,\beta)s & \stackrel{(\ref{rel-curv-Pi})}{=} &
C_D(\alpha,\beta)(s) \\
&\stackrel{(\ref{eq-curv})}{=} & (D_{\alpha}\circ D_{\beta} -
D_{\beta}\circ D_{\alpha}-D_{\{\alpha,\beta\}^{\varphi}})(s) \\
& \stackrel{(\ref{D-X})}{=} & D_{\alpha}(\langle\beta,X\rangle s) -
D_{\beta}(\langle\alpha,X\rangle s) - \langle
\{\alpha,\beta\}^{\varphi},X\rangle s\\
& \stackrel{(\ref{rel-contr-der})(\ref{D-X})}{=} &
\langle\beta,X\rangle \langle \alpha,X\rangle s +
\Lambda^{\sharp}(\alpha)(\langle\beta,X\rangle) s \\
&  & - \,\langle\alpha,X\rangle \langle \beta,X\rangle s -
\Lambda^{\sharp}(\beta)(\langle \alpha,X\rangle) s - \langle
\{\alpha,\beta\}^{\varphi},X\rangle s \\
& = & \partial_{\varphi}X(\alpha,\beta)s.
\end{eqnarray*}
Consequently, $\partial_{\varphi}\Pi = \partial_{\varphi}^2X =0$
which means that $\Pi$ defines a cohomology class in
$H_{_{\mathbb{C}}L-tP}^2(M)$ denoted by $[\Pi]^{\varphi}$.

\noindent (ii) Let $\tilde{D}$ be another contravariant derivative
on $\pi : K \to M$ having curvature $C_{\tilde{D}}$ and $\tilde{X}$
the corresponding local complex vector field (see, (i)). We denote
by $\tilde{\Pi}$ the corresponding to $C_{\tilde{D}}$ global complex
bivector field on $M$ (\ref{rel-curv-Pi}). From (\ref{Pi-X}), we
obtain
\begin{equation}\label{difference-Pi}
\tilde{\Pi} - \Pi = \partial_{\varphi}\tilde{X} -
\partial_{\varphi}X \Leftrightarrow \tilde{\Pi} = \Pi +
\partial_{\varphi}(\tilde{X}-X).
\end{equation}
Now, for any $\alpha \in \Gamma(T^*M)$, we define the mapping
$$
\hat{D}_{\alpha} = \tilde{D}_{\alpha} - D_{\alpha} : \Gamma(K) \to
\Gamma(K)
$$
that is $\mathbb{C}$-linear. Therefore, there exists a globally
defined complex vector field $\hat{X}$ on $M$ such that, for all
$s\in \Gamma(K)$,
$$
\hat{D}_{\alpha}s = \langle \alpha, \hat{X}\rangle s.
$$
From the last two relations, we deduce that, in the overlapping of
$X$ and $\tilde{X}$,
\begin{equation}\label{rel-X}
\hat{X} = \tilde{X}-X.
\end{equation}
So, using (\ref{rel-X}) in (\ref{difference-Pi}), we obtain
$\tilde{\Pi} = \Pi +
\partial_{\varphi}\hat{X}$, which means that
$[\tilde{\Pi}]^{\varphi}=[\Pi]^{\varphi}$.

\noindent (iii) We assume that $D$ is compatible with a Hermitian
metric $h$ on $\pi : K \to M$ and let $(e)$ be a local orthonormal
basis of $\Gamma(K)$. Then, for all $\alpha \in \Gamma(T^*M)$,
(\ref{def-hermitian}) gives us
$$
\Lambda^{\sharp}(\alpha)(h(e,e)) = h(D_{\alpha}e,e) +
h(e,D_{\alpha}e) \Leftrightarrow 0 \stackrel{(\ref{D-X})}{=}
h(\langle \alpha,X\rangle e,e) + h(e,\langle \alpha,X\rangle e)
\Leftrightarrow
$$
$$
0 = \langle \alpha,X\rangle + \overline{\langle \alpha,X\rangle}
\Leftrightarrow 0 = X + \bar{X},
$$
where the bar denotes complex conjugation. Hence, $X$ is purely
imaginary and, because $\Pi = \partial_{\varphi}X$, we conclude that
$\Pi$ is purely imaginary.
\end{proof}

\vspace{3mm}

From the above theorem we get the following definition.

\begin{definition}\label{tw-Poisson-Chern-class}
Let $\pi : K \to M$ be a complex line bundle over a twisted Poisson
manifold $(M,\Lambda,\varphi)$, $D$ a contravariant derivative on
$\pi : K \to M$ having curvature $C_D$ whose the associated bivector
field $\Pi$ is purely imaginary. Then, the well-defined cohomology
class $[\frac{i}{2\pi}\Pi]^{\varphi} \in H_{L-tP}^2(M)$ will be
called the \emph{first real twisted Poisson-Chern class of} $\pi : K
\to M$.
\end{definition}

Next, we will prove that $[\frac{i}{2\pi}\Pi]^{\varphi}$ is the
image by the homomorphism (\ref{homo-coho}) of the usual first real
Chern class of $\pi : K \to M$.

\vspace{2mm}

We recall that, given a complex Hermitian line bundle $\pi : K \to
M$ over a smooth manifold $M$, the \emph{first real Chern class
of} $\pi : K \to M$ is an element of the second de Rham cohomology
of $M$ with integer coefficients and it is denoted $c_1(K,\R)$,
\cite{kbn}. On the other hand, if $\pi : K \to M$ is endowed with
a Hermitian connection $\nabla$ with curvature $C_{\nabla}$, i.e.,
for all $X,Y \in \Gamma(TM)$,
$$
C_{\nabla}(X,Y) = \nabla_X\circ \nabla_Y - \nabla_Y \circ \nabla_X -
\nabla_{[X,Y]},
$$
there exists a purely imaginary closed $2$-form $\Omega$ on $M$
(\cite{kbn}) such that, for all $s\in \Gamma(K)$,
\begin{equation}\label{conn-Omega}
C_{\nabla}(X,Y)(s)=\Omega(X,Y)s
\end{equation}
and, in this case, the first real Chern class $c_1(K,\R)$ of $\pi :
K \to M$ is just (\cite{kst}) the integral cohomology class
$[\frac{i}{2\pi}\Omega]$ in $H_{dR}^2(M,\R)$. We note that
(\cite{kst}) the canonical injection $\varepsilon : \mathbb{Z} \to
\R$ induces a homomorphism
$$
\varepsilon : H_{dR}^2(M,\mathbb{Z}) \to H_{dR}^2(M,\R)
$$
and a class $[\alpha] \in H_{dR}^2(M,\R)$ is called \emph{integral}
if it lies in the image $\mathrm{Im}\varepsilon$ of $\varepsilon$.

\begin{theorem}\label{th-Chern-class}
Let $\pi : K \to M$ be a complex Hermitian line bundle over a
twisted Poisson manifold $(M,\Lambda,\varphi)$, $\nabla$ a Hermitian
connection on $\pi : K \to M$ and $D$ the associated to $\nabla$
Hermitian contravariant derivative on $\pi : K \to M$, i.e., for any
$\alpha \in \Gamma(T^*M)$,
$D_{\alpha}=\nabla_{\Lambda^{\sharp}(\alpha)}$. If $c_1(K,\R)$ and
$[\frac{i}{2\pi}\Pi]^{\varphi}$ are, respectively, the first real
Chern class and the first real twisted Poisson-Chern class of $\pi :
K \to M$, then
\begin{equation*}
\Lambda^{\sharp}(c_1(K,\R)) = [\frac{i}{2\pi}\Pi]^{\varphi},
\end{equation*}
where $\Lambda^{\sharp} : H_{dR}^2(M,\R) \to H_{L-tP}^2(M)$ is the
homomorphism (\ref{homo-coho}) between the second de Rham cohomology
and the corresponding Lichnerowicz-twisted Poisson cohomology of $M$.
\end{theorem}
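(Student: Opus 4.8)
The plan is to reduce the statement to the pointwise identity $\Pi = \Lambda^{\sharp}(\Omega)$, where $\Omega$ is the purely imaginary curvature $2$-form of $\nabla$ fixed by (\ref{conn-Omega}). Since $c_1(K,\R)=[\frac{i}{2\pi}\Omega]$ in $H_{dR}^2(M,\R)$, the definition (\ref{homo-coho}) of $\Lambda^{\sharp}$ in cohomology gives
$$
\Lambda^{\sharp}(c_1(K,\R)) = \left[\tfrac{i}{2\pi}\Lambda^{\sharp}(\Omega)\right]^{\varphi},
$$
so once $\Pi=\Lambda^{\sharp}(\Omega)$ is established the theorem follows at once, with no further cohomological manipulation needed.

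First I would rewrite the contravariant curvature $C_D$ through the ordinary curvature $C_{\nabla}$. Substituting $D_{\alpha}=\nabla_{\Lambda^{\sharp}(\alpha)}$ into (\ref{eq-curv}) and expressing the commutator $\nabla_{\Lambda^{\sharp}(\alpha)}\circ\nabla_{\Lambda^{\sharp}(\beta)}-\nabla_{\Lambda^{\sharp}(\beta)}\circ\nabla_{\Lambda^{\sharp}(\alpha)}$ as $C_{\nabla}(\Lambda^{\sharp}(\alpha),\Lambda^{\sharp}(\beta))+\nabla_{[\Lambda^{\sharp}(\alpha),\Lambda^{\sharp}(\beta)]}$, the leftover term $-\nabla_{\Lambda^{\sharp}(\{\alpha,\beta\}^{\varphi})}$ combines with the bracket term to yield
$$
C_D(\alpha,\beta) = C_{\nabla}(\Lambda^{\sharp}(\alpha),\Lambda^{\sharp}(\beta)) + \nabla_{[\Lambda^{\sharp}(\alpha),\Lambda^{\sharp}(\beta)]\,-\,\Lambda^{\sharp}(\{\alpha,\beta\}^{\varphi})}.
$$

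The decisive step, and the one I expect to be the main obstacle, is to kill the correction term by proving that the anchor is a morphism of brackets,
$$
\Lambda^{\sharp}(\{\alpha,\beta\}^{\varphi}) = [\Lambda^{\sharp}(\alpha),\Lambda^{\sharp}(\beta)].
$$
This is precisely the anchor compatibility carried by the Lie algebroid structure $(T^*M,\{\cdot,\cdot\}^{\varphi},\Lambda^{\sharp})$ recalled in Section \ref{sec-tp}. I would verify it first on exact forms $\alpha=df$, $\beta=dg$, where the defect $[X_f,X_g]-X_{\{f,g\}}$ is exactly the Jacobiator of $\{\cdot,\cdot\}$; by the twisted condition (\ref{def-tw-Poisson}) this Jacobiator equals $\Lambda^{\sharp}(\varphi)(df,dg,\cdot)$, and unwinding (\ref{formule-homo}) and (\ref{bnd-map}) shows it coincides with $\Lambda^{\sharp}$ of the correction term $\varphi(\Lambda^{\sharp}(df),\Lambda^{\sharp}(dg),\cdot)$ added in (\ref{def-Lie-br}). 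The general case then follows from the Leibniz rule and $\R$-bilinearity. Granting this, the correction term vanishes and $C_D(\alpha,\beta)=C_{\nabla}(\Lambda^{\sharp}(\alpha),\Lambda^{\sharp}(\beta))$.

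To conclude, I would evaluate both sides on a section $s$. By (\ref{rel-curv-Pi}) and (\ref{conn-Omega}),
$$
\Pi(\alpha,\beta)s = C_D(\alpha,\beta)(s) = \Omega(\Lambda^{\sharp}(\alpha),\Lambda^{\sharp}(\beta))s,
$$
hence $\Pi(\alpha,\beta)=\Omega(\Lambda^{\sharp}(\alpha),\Lambda^{\sharp}(\beta))$. Reading (\ref{formule-homo}) with $k=2$, where $(-1)^k=1$, the right-hand side is exactly $\Lambda^{\sharp}(\Omega)(\alpha,\beta)$, so $\Pi=\Lambda^{\sharp}(\Omega)$ and the theorem is proved. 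As a consistency check, $\nabla$ being Hermitian forces $\Omega$, and hence $\Pi$, to be purely imaginary, in agreement with Theorem \ref{three-proper}(iii) and with the real classes occurring in the statement.
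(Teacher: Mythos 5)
Your proposal is correct, and it reaches the decisive identity $\Pi=\Lambda^{\sharp}(\Omega)$ by a genuinely different route than the paper. The paper's proof works with local potentials: it introduces the local connection $1$-form $\omega$ of $\nabla$ via (\ref{omega}) and the local vector field $X$ of $D$ via (\ref{D-X}), notes that $D_{\alpha}=\nabla_{\Lambda^{\sharp}(\alpha)}$ forces $X=-\Lambda^{\sharp}(\omega)$, and then combines the relation $\Pi=\partial_{\varphi}X$ from the proof of Theorem \ref{three-proper}(i) with the chain-map identity (\ref{type-ch-der}) to get $\Pi=-\partial_{\varphi}\Lambda^{\sharp}(\omega)=\Lambda^{\sharp}(d\omega)=\Lambda^{\sharp}(\Omega)$; the final cohomological step is the same as yours. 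You instead stay at the level of curvature tensors and prove $C_D(\alpha,\beta)=C_{\nabla}(\Lambda^{\sharp}(\alpha),\Lambda^{\sharp}(\beta))$, which rests on the anchor-morphism property $\Lambda^{\sharp}(\{\alpha,\beta\}^{\varphi})=[\Lambda^{\sharp}(\alpha),\Lambda^{\sharp}(\beta)]$ of the Lie algebroid $(T^*M,\{\cdot,\cdot\}^{\varphi},\Lambda^{\sharp})$; your verification of that property --- on exact forms via the twisted Jacobi identity, with the sign bookkeeping of (\ref{formule-homo}) and (\ref{bnd-map}) done correctly, then extension by $C^{\infty}(M,\mathbb{R})$-Leibniz arguments since $1$-forms are locally spanned by exact ones --- is sound. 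What your route buys: it is global (no nowhere vanishing local sections, no local potentials $\omega$ and $X$), it isolates a statement of independent interest (the contravariant curvature is the pull-back of the covariant curvature along the anchor, so $\Pi=\Lambda^{\sharp}(\Omega)$ holds as a tensor identity rather than being derived locally), and it does not re-invoke (\ref{type-ch-der}) in the computation, although that identity is still what makes the map (\ref{homo-coho}) appearing in the statement well defined. What the paper's route buys: given its earlier development --- $\Pi=\partial_{\varphi}X$ in (\ref{Pi-X}) and the chain-map property (\ref{type-ch-der}) were already proved --- it is shorter, and it records the potential-level relation $X=-\Lambda^{\sharp}(\omega)$ linking the covariant and contravariant descriptions of the same connection.
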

\begin{proof}
Let $\omega$ be the local, purely imaginary, connection $1$-form on
$M$ associated to $\nabla$ (\cite{kst}) as follows. For any nowhere
vanishing local section $s$ of $\pi : K \to M$ and any $Y \in
\Gamma(TM)$,
\begin{equation}\label{omega}
\nabla_Ys = \langle \omega,Y\rangle s.
\end{equation}
Then, the purely imaginary closed $2$-form $\Omega$ on $M$
associated to $C_{\nabla}$ (\ref{conn-Omega}) coincides with
$d\omega$ (see, \cite{kbn}) and $c_1(K,\R)=[\frac{i}{2\pi}\Omega] =
[\frac{i}{2\pi}d\omega]$. Moreover, if $X$ is the local purely
imaginary vector field on $M$ defined by (\ref{D-X}), from the
definition of $D$, we get that, for any $\alpha \in \Gamma(T^*M)$,
\begin{eqnarray}\label{omega-X}
D_{\alpha}s = \nabla_{\Lambda^{\sharp}(\alpha)}s &
\stackrel{(\ref{D-X})(\ref{omega})}{\Leftrightarrow} & \langle
\alpha,X \rangle s = \langle \omega, \Lambda^{\sharp}(\alpha)\rangle s \nonumber \\
& \Leftrightarrow & \langle \alpha,X \rangle s = - \langle \alpha,
\Lambda^{\sharp}(\omega)\rangle s \nonumber \\
& \Leftrightarrow & X = - \Lambda^{\sharp}(\omega).
\end{eqnarray}
Thus, if $\Pi$ is the purely imaginary bivector field on $M$
associated to the curvature $C_D$ of $D$ (\ref{rel-curv-Pi}), we
have
$$
\Pi \stackrel{(\ref{Pi-X})}{=}\partial_{\varphi}X
\stackrel{(\ref{omega-X})}{=} -
\partial_{\varphi}\Lambda^{\sharp}(\omega)\stackrel{(\ref{type-ch-der})}{=}\Lambda^{\sharp}(d\omega).
$$
Consequently,
$$
[\frac{i}{2\pi}\Pi]^{\varphi} =
[\frac{i}{2\pi}\Lambda^{\sharp}(d\omega)]^{\varphi}
\stackrel{(\ref{homo-coho})}{=}\Lambda^{\sharp}([\frac{i}{2\pi}d\omega])
=\Lambda^{\sharp}(c_1(K,\R)).
$$
\end{proof}

\section{Prequantization of twisted Poisson manifolds}\label{sec-prequant}
In this section, we will prequantize a twisted Poisson manifold
$(M,\Lambda,\varphi)$ by associating to each differentiable
function on $M$ an operator that acts on the space of cross
sections of a Hermitian line bundle $\pi : K \to M$. As we have
mentioned in Introduction, this approach was first developed by B.
Kostant \cite{kst} and J.M. Souriau \cite{sou} for symplectic
manifolds and was extended by J. Huebschmann \cite{hbs} and I.
Vaisman \cite{vai} to Poisson manifolds, by M. de Le\'{o}n
\textit{et al}. \cite{lmp} to Jacobi manifolds and by A. Weinstein
and M. Zambon \cite{wz} to Dirac manifolds.

\vspace{3mm}

Let $(M,\Lambda,\varphi)$ be a twisted Poisson manifold and $\pi : K
\to M$ a Hermitian line bundle over $M$ endowed with a contravariant
derivative $D$ whose curvature is $C_D$. We define a representation
$\,\widehat{}\,$ of the $\varphi$-twisted Lie algebra
$(\C,\{\cdot,\cdot\})$ on $\mathrm{End}_{\mathbb{C}}(\Gamma(K))$ by
associating to each $f\in \C$ a complex endomorphism $\hat{f}$ of
$\Gamma(K)$ that is defined, for any $s\in \Gamma(K)$, by
\begin{equation}\label{def-repres}
\hat{f}(s) = D_{df}s + 2\pi i fs.
\end{equation}
Since $(\C,\{\cdot,\cdot\})$ is not a Lie algebra, the map
$$
\begin{array}{lccc}
\widehat{} \; : &  \C  & \rightarrow & \mathrm{End}_{\mathbb{C}}(\Gamma(K)) \\
& f & \mapsto & \hat{f}
\end{array}
$$
is no longer a homomorphism between $(\C,\{\cdot,\cdot\})$ and
$(\mathrm{End}_{\mathbb{C}}(\Gamma(K)), [\cdot,\cdot])$, where
$[\cdot,\cdot]$ denotes the usual commutator on
$\mathrm{End}_{\mathbb{C}}(\Gamma(K))$, as the prequantization
process requires. For this reason, we consider the subspace
$$
A =\{\hat{f}\in \mathrm{End}_{\mathbb{C}}(\Gamma(K)) \, / \, f \in
\C\}
$$
of $\mathrm{End}_{\mathbb{C}}(\Gamma(K))$ and define on this the
bracket
\begin{equation}\label{phi-bracket}
\hspace{30mm}[\hat{f},\hat{g}]^{\varphi}=[\hat{f},\hat{g}]-D_{\varphi(\Lambda^{\sharp}(df),\Lambda^{\sharp}(dg),\cdot)},
\quad \quad \quad \hat{f},\hat{g}\in A,
\end{equation}
where $[\hat{f},\hat{g}]= \hat{f}\circ \hat{g} -
\hat{g}\circ\hat{f}$, in order to obtain a faithful representation
of $(\C,\{\cdot,\cdot\})$ on $(A, [\cdot,\cdot]^{\varphi})$.

\begin{proposition}\label{br-curv}
The representation $\;\,\widehat{}\, : (\C, \{\cdot,\cdot\}) \to
(A, [\cdot,\cdot]^{\varphi})$ is a homomorphism, i.e., for all
$f,g \in \C$,
\begin{equation}\label{homo-bracket}
\widehat{\{f,g\}} = [\hat{f},\hat{g}]^{\varphi},
\end{equation}
if, and only if,
\begin{equation}\label{rel-curv-br}
C_D(df,dg) = -2\pi i \{f,g\}.
\end{equation}
\end{proposition}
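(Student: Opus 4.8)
The statement is an if-and-only-if linking the curvature relation $C_D(df,dg) = -2\pi i\{f,g\}$ to the homomorphism property $\widehat{\{f,g\}} = [\hat f,\hat g]^{\varphi}$. My strategy is to compute the right-hand side $[\hat f,\hat g]^{\varphi}$ directly from the definitions and to reduce it, modulo the curvature, to an operator that I can compare term-by-term with $\widehat{\{f,g\}}$. Since both sides are elements of $A$, i.e. operators of the form $\hat h$ for some $h \in \C$, the equality (\ref{homo-bracket}) will hold precisely when the discrepancy between the two is the zero operator, and this discrepancy will turn out to be exactly $C_D(df,dg) + 2\pi i\{f,g\}$ acting as multiplication. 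The forward and backward directions are then two readings of the same computed identity.

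\medskip

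\textbf{The main computation.} First I would expand $[\hat f,\hat g] = \hat f\circ\hat g - \hat g\circ\hat f$ using the definition (\ref{def-repres}), $\hat f(s) = D_{df}s + 2\pi i f s$. Applying $\hat g$ first and then $\hat f$ produces four groups of terms: a pure second-order piece $D_{df}D_{dg}s$ (and its symmetric partner $D_{dg}D_{df}s$), cross terms of the form $2\pi i\,D_{df}(gs)$ and $2\pi i\,g\,D_{df}s$, and the multiplication piece $(2\pi i)^2 fg\,s$. The key tool is the Leibniz-type property (\ref{rel-contr-der}): $D_{\alpha}(gs) = g\,D_{\alpha}s + (\Lambda^{\sharp}(\alpha)g)s$. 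Using this on the cross terms, the quadratic multiplication terms $fg$ cancel between the two orderings, and the derivative-of-function terms combine to give $2\pi i\big(\Lambda^{\sharp}(df)g - \Lambda^{\sharp}(dg)f\big)s = 2\pi i\big(\{f,g\} - \{g,f\}\big)s\cdot\tfrac12$, more precisely they assemble into $2\pi i\,(X_f(g) - X_g(f))s$. Since $X_f(g) = \Lambda^{\sharp}(df)g = \{f,g\}$ and by skew-symmetry $X_g(f) = -\{f,g\}$, these add to $4\pi i\{f,g\}s$; I would double-check the factor here carefully, as it is exactly the place where the constant $-2\pi i$ in (\ref{rel-curv-br}) gets pinned down. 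Collecting the second-order terms gives $\big(D_{df}D_{dg} - D_{dg}D_{df}\big)s$.

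\medskip

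\textbf{Invoking the curvature and the twisted correction.} Now I bring in the definition (\ref{eq-curv}) of the curvature, $C_D(df,dg) = D_{df}D_{dg} - D_{dg}D_{df} - D_{\{df,dg\}^{\varphi}}$, so that the second-order piece equals $C_D(df,dg) + D_{\{df,dg\}^{\varphi}}$ as operators on $s$. The bracket $\{df,dg\}^{\varphi}$ is the Lie-algebroid bracket (\ref{def-Lie-br}); using the defining property $\{df,dg\} = d\{f,g\}$ of the Koszul bracket together with (\ref{def-Lie-br}), I get $\{df,dg\}^{\varphi} = d\{f,g\} + \varphi(\Lambda^{\sharp}(df),\Lambda^{\sharp}(dg),\cdot)$, whence $D_{\{df,dg\}^{\varphi}} = D_{d\{f,g\}} + D_{\varphi(\Lambda^{\sharp}(df),\Lambda^{\sharp}(dg),\cdot)}$ by the $\R$-linearity (\ref{lin-D}). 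The term $D_{\varphi(\cdots)}$ is precisely what the twisted bracket (\ref{phi-bracket}) subtracts off, so after forming $[\hat f,\hat g]^{\varphi} = [\hat f,\hat g] - D_{\varphi(\Lambda^{\sharp}(df),\Lambda^{\sharp}(dg),\cdot)}$ that term disappears. What remains is $[\hat f,\hat g]^{\varphi}(s) = C_D(df,dg)(s) + D_{d\{f,g\}}s + 2\pi i\,c\,\{f,g\}s$ for the appropriate numerical constant $c$ arising from the first-order terms. Comparing with $\widehat{\{f,g\}}(s) = D_{d\{f,g\}}s + 2\pi i\{f,g\}s$, the $D_{d\{f,g\}}$ pieces match identically, and the difference collapses to $C_D(df,dg)(s)$ plus a multiplication-operator term. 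The homomorphism property (\ref{homo-bracket}) therefore holds if and only if this residual operator vanishes, which is exactly (\ref{rel-curv-br}).

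\medskip

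\textbf{Where the difficulty lies.} The computation itself is routine bookkeeping, so the genuine care is in tracking the numerical constants through the first-order terms, since the whole content of the equivalence is the matching of the factor $-2\pi i$; a sign or factor-of-two slip there would produce a false constant in (\ref{rel-curv-br}). A second subtle point is the role of $\varphi$: one must verify that the Koszul-bracket identity $\{df,dg\} = d\{f,g\}$ holds unchanged in the twisted setting and that the extra three-form term in (\ref{def-Lie-br}) is exactly cancelled by the correction in (\ref{phi-bracket}) — this is the structural reason the twisted bracket $[\cdot,\cdot]^{\varphi}$, rather than the naive commutator, is the right target for the representation. Once these two bookkeeping points are settled, the equivalence reads off directly from the single assembled identity, with the forward implication (assume (\ref{rel-curv-br}), conclude (\ref{homo-bracket})) and its converse (the residual operator vanishes for all $s$, forcing $C_D(df,dg) = -2\pi i\{f,g\}$) being the two directions of the same equation.
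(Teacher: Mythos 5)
Your proposal is correct and takes essentially the same route as the paper's proof: expand the commutator with the Leibniz rule to get $[\hat f,\hat g]=D_{df}\circ D_{dg}-D_{dg}\circ D_{df}+4\pi i\{f,g\}$, then use the curvature definition together with $\{df,dg\}^{\varphi}=d\{f,g\}+\varphi(\Lambda^{\sharp}(df),\Lambda^{\sharp}(dg),\cdot)$ so that $D_{\varphi(\Lambda^{\sharp}(df),\Lambda^{\sharp}(dg),\cdot)}$ cancels against the correction in $[\cdot,\cdot]^{\varphi}$, leaving exactly the residual $C_D(df,dg)+2\pi i\{f,g\}$, whose vanishing for all sections is the stated equivalence. (The stray factor $\tfrac{1}{2}$ in your intermediate display is a slip --- the first-order terms are exactly $4\pi i\{f,g\}s$, as you then correctly conclude --- and does not affect the argument.)
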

\begin{proof}
By a simple computation, using (\ref{def-repres}) and
(\ref{rel-contr-der}), we get
\begin{equation}\label{br-commut}
[\hat{f},\hat{g}]  =  \hat{f}\circ \hat{g} - \hat{g}\circ\hat{f} =
D_{df}\circ D_{dg} - D_{dg}\circ D_{df} + 4\pi i \{f,g\}.
\end{equation}
On the other hand, we have
\begin{eqnarray*}
\widehat{\{f,g\}} & \stackrel{(\ref{def-repres})}{=} & D_{d\{f,g\}}
+ 2\pi i \{f,g\} \nonumber \\
& \stackrel{(\ref{def-Lie-br})(\ref{lin-D})}{=} &
D_{\{df,dg\}^{\varphi}} -
D_{\varphi(\Lambda^{\sharp}(df),\Lambda^{\sharp}(dg),\cdot)} +
2\pi i
\{f,g\} \nonumber \\
& \stackrel{(\ref{eq-curv})}{=} & D_{df}\circ D_{dg} - D_{dg}\circ
D_{df} - C_D(df,dg)  \nonumber \\
&  & -\,D_{\varphi(\Lambda^{\sharp}(df),\Lambda^{\sharp}(dg),\cdot)}
+ 4\pi i
\{f,g\} - 2\pi i \{f,g\} \nonumber \\
& \stackrel{(\ref{br-commut})(\ref{phi-bracket})}{=} &
[\hat{f},\hat{g}]^{\varphi} - C_D(df,dg)- 2\pi i \{f,g\}.
\end{eqnarray*}
Thus, (\ref{homo-bracket}) holds if, and only if,
(\ref{rel-curv-br}) holds.
\end{proof}

\begin{definition}\label{def-pre-quan}
We say that a twisted Poisson manifold $(M,\Lambda,\varphi)$ is
\emph{prequantizable} if there exists a Hermitian complex line
bundle $\pi : K \to M$,  \emph{the prequantization bundle}, such
that the operators (\ref{def-repres}) make sense on $\Gamma(K)$ and
satisfy (\ref{homo-bracket}).
\end{definition}

Hence, according to Proposition \ref{br-curv} and the above
Definition, the prequantization problem of a twisted Poisson
manifold $(M,\Lambda,\varphi)$ has a solution if, and only if,
there exists a Hermitian complex line bundle $\pi : K \to M$
equipped with a contravariant derivative $D$ whose the curvature
$C_D$ satisfies
\begin{equation}\label{curv-Lambda}
C_D = -2\pi i \Lambda.
\end{equation}
We see that $C_D$ must be purely imaginary, fact that obliges us
to consider $D$ compatible with the Hermitian structure of $\pi :
K \to M$.

\begin{theorem}
A twisted Poisson manifold $(M,\Lambda,\varphi)$ is prequantizable
if, and only if, there exist a vector field $Z$ on $M$ and a
closed $2$-form $\Phi$ on $M$, which represents an integral
cohomology class of $M$, such that the following relation holds on
$M$:
\begin{equation}\label{cond-quant}
\Lambda + \partial_{\varphi}Z = \Lambda^{\sharp}(\Phi).
\end{equation}
\end{theorem}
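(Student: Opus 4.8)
The plan is to reduce the statement to an \emph{exact} (not merely cohomological) equation for the bivector field $\Pi$ attached to a Hermitian contravariant derivative, and then to run the argument in both directions. First I would record the reformulation already isolated in the text: by Proposition \ref{br-curv}, Definition \ref{def-pre-quan} and equation (\ref{curv-Lambda}), the manifold $(M,\Lambda,\varphi)$ is prequantizable if, and only if, there is a Hermitian line bundle $\pi:K\to M$ carrying a Hermitian contravariant derivative $D$ whose curvature satisfies $C_D=-2\pi i\Lambda$; by (\ref{rel-curv-Pi}) this is the same as asking that the associated (purely imaginary, by Theorem \ref{three-proper}(iii)) bivector field satisfy $\frac{i}{2\pi}\Pi=\Lambda$ as an exact equality of bivector fields. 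So the whole theorem becomes: such a pair $(K,D)$ exists if, and only if, (\ref{cond-quant}) admits a solution $(Z,\Phi)$ with $\Phi$ closed and integral.

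For the direct implication, suppose $(K,D)$ as above exists. I equip $K$ with any Hermitian connection $\nabla$; its curvature $\Omega$ is purely imaginary and closed, and $\Phi:=\frac{i}{2\pi}\Omega$ is then a real closed $2$-form representing $c_1(K,\R)$, hence an integral class. The Hermitian contravariant derivative $D^0_\alpha:=\nabla_{\Lambda^{\sharp}(\alpha)}$ has, by the computation inside the proof of Theorem \ref{th-Chern-class}, associated bivector field $\Pi^0=\Lambda^{\sharp}(\Omega)$, so $\frac{i}{2\pi}\Pi^0=\Lambda^{\sharp}(\Phi)$. Since $D$ and $D^0$ are both Hermitian, Theorem \ref{three-proper}(ii) (relations (\ref{difference-Pi}) and (\ref{rel-X})) gives a globally defined complex vector field $\hat X$ with $\Pi=\Pi^0+\partial_{\varphi}\hat X$, and the argument of Theorem \ref{three-proper}(iii) applied to the difference $D-D^0$ shows $\hat X$ is purely imaginary. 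Setting $Z:=-\frac{i}{2\pi}\hat X$ (a real vector field) and using $\frac{i}{2\pi}\Pi=\Lambda$, I obtain $\Lambda=\Lambda^{\sharp}(\Phi)-\partial_{\varphi}Z$, i.e. exactly (\ref{cond-quant}).

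For the converse, I start from a solution $(Z,\Phi)$ of (\ref{cond-quant}) with $\Phi$ closed and integral. The classical prequantization (Weil--Kostant) theorem then furnishes a Hermitian line bundle $\pi:K\to M$ and a Hermitian connection $\nabla$ whose curvature is exactly $\Omega=-2\pi i\Phi$; the only point to check here is that, once a connection with curvature cohomologous to $-2\pi i\Phi$ is produced from integrality, one may add a purely imaginary $1$-form to it so as to make the curvature equal to $-2\pi i\Phi$ while keeping it Hermitian. As before $D^0_\alpha=\nabla_{\Lambda^{\sharp}(\alpha)}$ gives $\frac{i}{2\pi}\Pi^0=\Lambda^{\sharp}(\Phi)$. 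I then perturb $D^0$ by the $\mathbb{C}$-linear operator $\hat D_\alpha s=\langle\alpha,\hat X\rangle s$ with $\hat X:=2\pi i Z$; since $Z$ is real, $\hat X$ is purely imaginary, so $D:=D^0+\hat D$ is again Hermitian, and $\Pi=\Pi^0+\partial_{\varphi}\hat X$. Because $\frac{i}{2\pi}\hat X=-Z$, this yields $\frac{i}{2\pi}\Pi=\Lambda^{\sharp}(\Phi)-\partial_{\varphi}Z=\Lambda$ by (\ref{cond-quant}), i.e. $C_D=-2\pi i\Lambda$, so $K$ is a prequantization bundle.

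I expect the only genuinely non-formal step to be the invocation of the Weil--Kostant integrality theorem in the converse, together with the small adjustment needed to realize the prescribed form $-2\pi i\Phi$ exactly as a curvature; everything else is bookkeeping with $\partial_{\varphi}$ and the identity (\ref{type-ch-der}), and amounts to keeping track of the reality and imaginary conditions so that $Z$ stays real and $D$ stays compatible with the Hermitian metric.
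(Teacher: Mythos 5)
Your proposal is correct and takes essentially the same route as the paper's proof: both directions reduce prequantizability to the exact curvature equation $C_D=-2\pi i\Lambda$, compare $D$ with $D^0_{\alpha}=\nabla_{\Lambda^{\sharp}(\alpha)}$ for a Hermitian connection $\nabla$ (so that $\Phi=\frac{i}{2\pi}\Omega$ is closed and integral), and in the converse build $D$ from $\nabla$ and the perturbation $2\pi i\langle\cdot,Z\rangle$, exactly as in the paper. If anything, your bookkeeping is slightly tighter: you use the exact identity $\Pi^{0}=\Lambda^{\sharp}(\Omega)$ and deduce that $\hat X$ is purely imaginary from the Hermitian compatibility of $D-D^{0}$, whereas the paper routes the forward implication through the cohomological statements of Theorem \ref{th-Chern-class} and Theorem \ref{three-proper} (citing part (iii) where part (ii) is meant) and then silently upgrades the class equality to an exact one.
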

\begin{proof}
We consider that $(M,\Lambda,\varphi)$ is prequantizable. Then,
there exists a Hermitian complex line bundle $\pi : K \to M$ with
a Hermitian contravariant derivative $D$ whose curvature $C_D$
verifies (\ref{curv-Lambda}), consequently
\begin{equation}\label{rel-Lambda-Pi}
\Lambda = \frac{i}{2\pi}C_D
\stackrel{(\ref{rel-curv-Pi})}{=}\frac{i}{2\pi}\Pi,
\end{equation}
where $\Pi$ is the purely imaginary, $\partial_{\varphi}$-closed,
bivector field on $M$ associated to $C_D$. On the other hand, let
$\nabla$ be a Hermitian connection on $\pi : K \to M$ with
curvature $2$-form $\Omega$, i.e., for all $X,Y \in \Gamma(TM)$
and $s\in \Gamma(K)$, $C_{\nabla}(X,Y)(s) = \Omega(X,Y)s$, that is
purely imaginary and closed. So, $\Phi = \frac{i}{2\pi}\Omega$ is
a real closed $2$-form on $M$ and represents the first real Chern
class $c_1(K,\R)$ of $\pi : K \to M$ which is integral, i.e.,
$c_1(K,\R)=[\Phi]$ (see, section 3). Now, we consider the
Hermitian contravariant derivative $\bar{D}$ on $\pi : K \to M$
defined by $\nabla$, i.e., for any $\alpha \in \Gamma(T^*M)$,
$\bar{D}_{\alpha} = \nabla_{\Lambda^{\sharp}(\alpha)}$. Let
$\bar{\Pi}$ be the purely imaginary bivector field on $M$
associated to $C_{\bar{D}}$ as in (\ref{rel-curv-Pi}). According
to Theorem \ref{th-Chern-class}, we have $\Lambda^{\sharp}([\Phi])
= [\frac{i}{2\pi}\bar{\Pi}]^{\varphi}
\stackrel{(\ref{homo-coho})}{\Leftrightarrow}[\Lambda^{\sharp}(\Phi)]^{\varphi}
= [\frac{i}{2\pi}\bar{\Pi}]^{\varphi}$. But, property (iii) of
Theorem \ref{three-proper} yields
$[\bar{\Pi}]^{\varphi}=[\Pi]^{\phi}$, which means that there
exists a purely imaginary vector field $W$ on $M$ such that
$\bar{\Pi} = \Pi + \partial_{\varphi}W$. Hence,
$$
\frac{i}{2\pi}\bar{\Pi} = \frac{i}{2\pi}\Pi +
\frac{i}{2\pi}\partial_{\varphi}W \Leftrightarrow
\Lambda^{\sharp}(\Phi) = \Lambda + \partial_{\varphi}Z,
$$
where $Z=\frac{i}{2\pi}W$.

Conversely, we assume that there exist a vector field $Z$ and a
closed $2$-form $\Phi$ on $(M,\Lambda,\varphi)$ such that
(\ref{cond-quant}) is true on $M$. Then, there exists a Hermitian
complex line bundle $\pi : K \to M$ over $M$ equipped with a
Hermitian connection $\nabla$ having as curvature $2$-form the
purely imaginary closed $2$-form $-2\pi i \Phi$. Using $\nabla$,
we define a contravariant derivative $D : \Gamma(T^*M) \to
\mathrm{End}_{\mathbb{C}}(\Gamma(K))$ on $\pi : K \to M$ as
follows: for all $\alpha \in \Gamma(T^*M)$ and $s\in \Gamma(K)$,
\begin{equation}\label{D-connection}
D_{\alpha}s = \nabla_{\Lambda^{\sharp}(\alpha)}s + 2\pi i \langle
\alpha, Z\rangle s.
\end{equation}
By a straightforward computation, we can prove that $D$ is
Hermitian. Also, we have that its curvature $C_{\bar{D}}$ satisfies
(\ref{curv-Lambda}). In fact, for all $\alpha, \beta \in
\Gamma(T^*M)$ and $s\in \Gamma(K)$,
\begin{eqnarray*}
C_D(\alpha,\beta)(s) & \stackrel{(\ref{eq-curv})}{=} &
(D_{\alpha}\circ D_{\beta} - D_{\beta}\circ
D_{\alpha}-D_{\{\alpha,\beta\}^{\varphi}})(s) \\
& \stackrel{(\ref{D-connection})}{=} &
D_{\alpha}(\nabla_{\Lambda^{\sharp}(\beta)}s + 2\pi i \langle
\beta, Z\rangle s) - D_{\beta}(\nabla_{\Lambda^{\sharp}(\alpha)}s
+ 2\pi i
\langle \alpha, Z\rangle s) \\
&  & - \,\nabla_{\Lambda^{\sharp}(\{\alpha,\beta\}^{\varphi})}s -
2\pi i
\langle \{\alpha,\beta\}^{\varphi}, Z\rangle s \\
& \stackrel{(\ref{D-connection})}{=} &
\nabla_{\Lambda^{\sharp}(\alpha)}(\nabla_{\Lambda^{\sharp}(\beta)}s
+ 2\pi i \langle \beta, Z\rangle s) + 2\pi i \langle \alpha,
Z\rangle (\nabla_{\Lambda^{\sharp}(\beta)}s + 2\pi i \langle
\beta, Z\rangle s)
\\
&  & -
\,\nabla_{\Lambda^{\sharp}(\beta)}(\nabla_{\Lambda^{\sharp}(\alpha)}s
+ 2\pi i \langle \alpha, Z\rangle s) - 2\pi i \langle \beta,
Z\rangle (\nabla_{\Lambda^{\sharp}(\alpha)}s + 2\pi i \langle
\alpha, Z\rangle s)
\\
&  & -\,\nabla_{[\Lambda^{\sharp}(\alpha),\Lambda^{\sharp}(\beta)]}s
- 2\pi i
\langle \{\alpha,\beta\}^{\varphi}, Z\rangle s \\
& = &
C_{\nabla}(\Lambda^{\sharp}(\alpha),\Lambda^{\sharp}(\beta))s +
2\pi i(\Lambda^{\sharp}(\alpha)\langle \beta, Z\rangle -
\Lambda^{\sharp}(\beta)\langle \alpha, Z\rangle \\
&  & -\, \langle\{\alpha,\beta\}^{\varphi}, Z \rangle )s \\
& = & - 2\pi i
\Phi(\Lambda^{\sharp}(\alpha),\Lambda^{\sharp}(\beta))s +
2\pi i \partial_{\varphi}Z (\alpha, \beta)s \\
& \stackrel{(\ref{formule-homo})(\ref{cond-quant})}{=} & - 2\pi i
\Lambda(\alpha, \beta)s,
\end{eqnarray*}
whence we conclude that $(M,\Lambda, \varphi)$ is prequantizable.
\end{proof}

\begin{remark}
{\rm Since the first Chern class of a complex line bundle over a
differentiable manifold $M$ is a complete invariant used to classify
complex line bundles over $M$, i.e., there is a bijection between
the isomorphism classes of complex line bundles over $M$ and the
elements of $H^2_{dR}(M,\mathbb{Z})$ (\cite{kbn}), we have that $K$
is not unique. Any other Hermitian complex line bundle over $M$
isomorphic to $K$ can be viewed as a prequantization bundle of
$(M,\Lambda,\varphi)$.}
\end{remark}

\subsection{Examples}\label{ex-prequan-tw-Poisson}
\emph{1) Poisson manifolds:} Let $(M,\Lambda, \varphi)$ be a
twisted Poisson manifold such that $\Lambda^{\sharp}(\varphi)=0$,
i.e. $(M,\Lambda)$ is a Poisson manifold. Then, the cotangent
bundle $T^*M$ of $M$ is equipped with two different Lie algebroids
structures $(\{\cdot,\cdot\}, \Lambda^{\sharp})$ and
$(\{\cdot,\cdot\}^{\varphi}, \Lambda^{\sharp})$ whose the brackets
are given, respectively, by (\ref{def-kz}) and (\ref{def-Lie-br}).
If $D$ is a contravariant derivative on an Hermitian complex line
bundle $\pi : K\to M$ over $M$, then its curvatures $R_D$ and
$C_D$ with respect to $\{\cdot,\cdot\}$ and
$\{\cdot,\cdot\}^{\varphi}$, respectively, are related, for any
$\alpha, \beta \in \Gamma(T^*M)$, by
\begin{equation}\label{curv-C-R}
C_D(\alpha,\beta) = R_D(\alpha,\beta) -
D_{\varphi(\Lambda^{\sharp}(\alpha),\Lambda^{\sharp}(\beta),\cdot)}.
\end{equation}
Hence, according to Definition \ref{def-pre-quan}, Proposition
\ref{br-curv}, and the formul{\ae} (\ref{phi-bracket}) and
(\ref{curv-C-R}), we conclude that $(M,\Lambda)$ is prequantizable
as Poisson manifold (\cite{vai}) if and only if
$(M,\Lambda,\varphi)$ is prequantizable as twisted Poisson manifold.

\vspace{2mm}

\noindent \emph{2) Twisted Poisson manifolds associated to
symplectic manifolds I:} Any twisted Poisson structure
$(\Lambda,\varphi)$ on a $2n$-dimensional differentiable manifold
$M_0$, $n\geq2 $, constructed by a symplectic structure $\omega_0$
on $M_0$ as in Example 2 of the subsection
\ref{examples-tw-Poisson}, i.e. $\Lambda=f\Lambda_0$ and
$\varphi=-f^{-2}\omega_0\wedge df$, where
$\Lambda_0=\Lambda_0^{\sharp}(\omega_0)$ and $f$ is an arbitrary
non constant function on $M_0$, is not prequantizable. We will
prove that the prequantization equation (\ref{cond-quant}) has not
solutions on $M_0$. We note that every vector field $Z$ on $M_0$
can be written as $Z=\Lambda_0^{\sharp}(\alpha)$ with $\alpha\in
\Gamma(T^*M)$. Therefore,
\begin{equation}\label{eq-ex-2}
\Lambda +\partial_{\varphi}Z = \Lambda_0^{\sharp}(f\omega_0 -f
d\alpha -\alpha \wedge df).
\end{equation}
On the other hand, if there exists a closed $2$-form $\Phi$ on
$M_0$ such that, for a particular vector field $Z$ on $M_0$,
$\Lambda + \partial_{\varphi}Z =
\Lambda^{\sharp}(\Phi)=f^2\Lambda_0^{\sharp}(\Phi)$, then, taking
into account (\ref{eq-ex-2}) and the fact that
$\Lambda_0^{\sharp}$ is inversible, we will must have
$$
f^2\Phi =f\omega_0 -f d\alpha -\alpha \wedge df \Leftrightarrow \Phi
= f^{-1}\omega_0 -f^{-1}d\alpha -f^{-2}\alpha \wedge df.
$$
But, in this case, $d\Phi = -f^{-2}\omega_0\wedge df = \varphi
\neq 0$, for any non constant function $f$ on $M_0$. Thus,
$(M_0,\Lambda,\varphi)$ is not prequantizable.

\vspace{2mm}

\noindent \emph{3) Twisted Poisson manifolds associated to
symplectic manifolds II:} Let $(M,\Lambda,\varphi)$ be a twisted
Poisson manifold constructed by a symplectic manifold
$(M_0,\omega_0)$ as in Example 3 of the subsection
\ref{examples-tw-Poisson}, i.e., $M=M_0\times \R$,
$$
\Lambda = e^t(\Lambda_0 + \Lambda_0^{\sharp}(df)\wedge
\frac{\partial}{\partial t}) \quad \mathrm{and} \quad \varphi =
-e^{-t}\omega_0\wedge dt,
$$
$t$ being the canonical coordinate on $\R$ and $f\in
C^{\infty}(M_0,\R)$. We assume that the symplectic structure
$\omega_0$ is of the particular type $\omega_0 = d\alpha_0
-\alpha_0\wedge df$, where $\alpha_0$ is a convenient $1$-form on
$M_0$, i.e., $\alpha_0$ is a $1$-form on $M_0$ such that $\omega_0
= d\alpha_0 -\alpha_0\wedge df$ is nondegenerate and
$d\omega_0=-d\alpha_0\wedge df =0$. Then, $(M,\Lambda,\varphi)$ is
prequantizable. Effectively, if we take $Z=\partial / \partial t$
and $\Phi =d(e^{-t}\alpha_0)$, which represents the integral
cohomology class $[0]\in H_{dR}^2(M,\R)$ of $M$, after a simple
computation we obtain that (\ref{cond-quant}) holds on $M$.

\vspace{2mm}

\noindent \emph{4) Exact twisted Poisson manifolds:} Let
$(M,\Lambda,\varphi)$ be an exact twisted Poisson manifold,
namely, there exists a vector field $X$ on $M$ such that $\Lambda
=
\partial_{\varphi}X$, fact that is equivalent to
$[\Lambda]^{\varphi}=[0]^{\varphi}\in H_{L-tP}^2(M)$. Then,
$(M,\Lambda,\varphi)$ is prequantizable. The vector field $Z=-X$ and
the $2$-form $\Phi = 0$ satisfy the prequantization condition
(\ref{cond-quant}). The trivial complex line bundle $\pi : M\times
\mathbb{C} \to M$, whose space of global cross sections
$\Gamma(M\times \mathbb{C})$ is equal to the set
$C^{\infty}(M,\mathbb{C})$, equipped with the usual Hermitian metric
$h$, i.e., for any $s_1,s_2 \in C^{\infty}(M,\mathbb{C})$,
$h(s_1,s_2) = s_1\bar{s}_2$, and the compatible with $h$
contravariant derivative $D$ given, for any $\alpha \in
\Gamma(T^*M)$ and $s\in C^{\infty}(M,\mathbb{C})$, by $D_{\alpha}s =
\Lambda^{\sharp}(\alpha)s$, is a prequantization bundle of
$(M,\Lambda,\varphi)$.

\vspace{2mm}

\noindent \emph{5) Twisted Poisson structures induced by twisted
Jacobi manifolds:} Let $(M_0,\Lambda_0, d\omega_0)$ be the twisted
Poisson manifold constructed by a twisted Jacobi manifold
$(M,\Lambda,E,\varpi^*\omega_0)$ in Example 4 of the subsection
\ref{examples-tw-Poisson}. Let $\eta$ be the $1$-form along $M_0$
that verifies $i(E)\eta=1$ and $i(X)\eta =0$, for any vector field
$X$ on $M$ tangent to $M_0$. By integration along the integral
curves of $E$ and by restriction, if necessary, of the tubular
neighbourhood $U$ of $M_0$ in $M$, we can construct a function $h$
on $M$ such that $h\vert_{M_0}=0$ and $i(E)dh =1$, hence
$dh\vert_{M_0}=\eta$. Let $X_h = \Lambda^{\sharp}(dh)+hE$ be the
Hamiltonian vector field of $h$ with respect the twisted Jacobi
structure $(\Lambda,E)$ on $M$. Since $[E,\Lambda](dh,\cdot)=0$,
we have $[E,X_h]=E$, whence we conclude that $X_h$ is projectable
along the integral curves of $E$ onto $M_0$. Let $Z_0$ be its
projection, i.e. $Z_0 = \varpi_*X_h =
\varpi_*(\Lambda^{\sharp}(dh)) = \Lambda^{\sharp}(\eta)$. The
differential operator of first order $X_h-1$ verifies (see in
\cite{jf} Propositions 3.1 and 3.5), for any $f,g\in \C$, the
relation
\begin{eqnarray*}
(X_h-1)\{f,g\} & = & \{h,\{f,g\}\} \\
& = & \{\{h,f\},g\} + \{f,\{h,g\}\} \\
&  & +\,\Lambda^{\sharp}(\varpi^*d\omega_0)(dh,df,dg) +
\Lambda^{\sharp}(\varpi^*\omega_0)\wedge E (dh,df,dg).
\end{eqnarray*}
By projection, we obtain that the first order differential
operator $Z_0-1$ verifies, for any $f_0,g_0 \in
C^{\infty}(M_0,\mathbb{R})$,
\begin{eqnarray*}
(Z_0-1)\{f_0,g_0\} & = & \{(Z_0-1)f_0,g_0\} + \{f_0,(Z_0-1)g_0\} \\
& & -\,
d\omega_0(Z_0,\Lambda_0^{\sharp}(df_0),\Lambda_0^{\sharp}(dg_0)) +
\omega_0(\Lambda_0^{\sharp}(df_0),\Lambda_0^{\sharp}(dg_0)).
\end{eqnarray*}
From the above relation, after a simple computation, we get
$$
\Lambda_0 + \partial_{d\omega_0}(-Z_0) =
\Lambda_0^{\sharp}(\omega_0).
$$
If $\omega_0$ is closed and represents an integral cohomology
class of $M_0$, then, the last equation means that the induced
twisted Poisson structure $(\Lambda_0,d\omega_0) = (\Lambda_0, 0)$
is a prequantizable Poisson structure on $M_0$. (For more details,
see \cite{pt}.)

\vspace{2mm}

\noindent \emph{6) A $r$-matrix type twisted Poisson structure:}
We consider the twisted Poisson structure of the Example 4.8 in
\cite{ksml} (see, also Example 5 in \cite{ksc}). Let $\mathcal{G}$
be the subalgebra of the Lie algebra of $GL(3,\mathbb{R})$ spanned
by $\{e_{ij}\, / \, 1\leq i \leq 2, \; 1\leq j \leq 3\}$. We
denote by $\{e_{ij}^*\, / \, 1\leq i \leq 2, \; 1\leq j \leq 3\}$
the dual basis of its dual space $\mathcal{G}^*$. The pair $(r,
\varphi)$, where
$$
r = e_{11}\wedge e_{22} + e_{13}\wedge e_{23} \quad \mathrm{and}
\quad \varphi = - (e_{11}^* + e_{22}^*)\wedge e_{13}^*\wedge
e_{23}^*,
$$
defines a twisted Poisson structure on $\mathcal{G}$. It is easy
to check that $\varphi$ is closed and $\frac{1}{2}[r,r] =
r^{\sharp}(\varphi)$. We will show that $(r, \varphi)$ is not
prequantizable on $\mathcal{G}$. After a simple, but long,
computation, we prove that the space of closed $2$-forms of
$\mathcal{G}$ is spanned by $\{(e_{11}^* - e_{22}^*)\wedge
e_{12}^*,(e_{11}^* - e_{22}^*)\wedge e_{21}^*, e_{12}^*\wedge
e_{21}^*\}$ and that, for any such form $\Phi$ of $\mathcal{G}$,
$r^{\sharp}(\Phi)=0$. On the other hand, for any vector
$Z=\sum_{i,j}\lambda_{ij}e_{ij}$, $\lambda_{ij}\in \mathbb{R}$, of
$\mathcal{G}$, we have
\begin{eqnarray*}
r + \partial_{\varphi}Z & = & - \lambda_{12} e_{11}\wedge e_{12} -
\lambda_{13} e_{11}\wedge e_{13} + \lambda_{21} e_{11}\wedge e_{21}
+ e_{11}\wedge e_{22} + \lambda_{12}e_{12}\wedge e_{22}\\
&  &  + \, (1-\lambda_{12} + \lambda_{21})e_{13}\wedge e_{23} -
\lambda_{21}e_{21}\wedge e_{22} + \lambda_{23}e_{22}\wedge e_{23}
\neq 0.
\end{eqnarray*}
Hence, we conclude that the prequantization equation
(\ref{cond-quant}) has not a solution $(Z,\Phi)$ on $\mathcal{G}$.

\section{Quantization}\label{sec-quant}
The second step of the geometric quantization of a twisted Poisson
manifold $(M,\Lambda,\varphi)$ is the construction of a Hilbert
space out of its prequantization space $\Gamma(K)$ on which a
convenient $\varphi$-twisted Lie subalgebra of the $\varphi$-twisted
Lie algebra $(\C,\{\cdot,\cdot\})$ will be represented irreducibly.
For this reason, we introduce the notion of \emph{polarization}
(\cite{sn}, \cite{wd}) of $(M,\Lambda,\varphi)$ as follows.

\vspace{2mm}

We consider the complexification $T^*M \otimes \mathbb{C}$ of the
cotangent bundle $T^*M$ of $M$ and we endow the space of its cross
sections $\Gamma(T^*M \otimes \mathbb{C})$ with the natural
extension of the bracket (\ref{def-Lie-br}), also denoted by
$\{\cdot , \cdot\}^{\varphi}$. Then, $(T^*M \otimes
\mathbb{C},\{\cdot , \cdot\}^{\varphi}, \Lambda^{\sharp})$, where
$\Lambda^{\sharp} : T^*M \otimes \mathbb{C} \to TM \otimes
\mathbb{C}$ is the natural extension to $T^*M \otimes \mathbb{C}$ of
the vector bundle map given by (\ref{bnd-map}), is a complex Lie
algebroid over $M$, in the sense of \cite{aw}, and $(\Gamma(T^*M
\otimes \mathbb{C}), \{\cdot , \cdot\}^{\varphi})$ is a complex Lie
algebra. We define a \emph{polarization of} $(M,\Lambda,\varphi)$ to
be a complex Lie subalgebra $\mathcal{P}$ of $(\Gamma(T^*M \otimes
\mathbb{C}), \{\cdot , \cdot\}^{\varphi})$ such that, for all
$\alpha, \beta \in \mathcal{P}$,
$$
\Lambda(\alpha,\beta) = 0.
$$
When $\mathcal{P}$ is fixed, we set
$$
P(\mathcal{P})= \{f\in \C \, / \,\{df,\alpha\}^{\varphi}\in
\mathcal{P}, \quad \mathrm{for}\;\mathrm{all}\;\; \alpha \in
\mathcal{P} \}
$$
and we consider the subset $\widetilde{P(\mathcal{P})}$ of
$P(\mathcal{P}) \times P(\mathcal{P})$ given by
\begin{eqnarray*}
\lefteqn{\widetilde{P(\mathcal{P})}=\Big{\{}(f,g) \in
P(\mathcal{P}) \times P(\mathcal{P}) \setminus
\Delta(P(\mathcal{P}) \times
P(\mathcal{P})) \, \big{/}} \\
&  & \quad \quad \quad
\{\varphi(\Lambda^{\sharp}(df),\Lambda^{\sharp}(dg),\cdot
),\alpha\}^{\varphi}\in \mathcal{P}, \quad
\mathrm{for}\;\mathrm{all}\;\; \alpha \in \mathcal{P}\Big{\}},
\end{eqnarray*}
where $\Delta(P(\mathcal{P}) \times P(\mathcal{P}))$ denotes the
diagonal of $P(\mathcal{P}) \times P(\mathcal{P})$. Clearly,
$\widetilde{P(\mathcal{P})}$ is symmetric with respect to
$\Delta(P(\mathcal{P}) \times P(\mathcal{P}))$. If
$\mathcal{Q}(\mathcal{P})$ is the projection of
$\widetilde{P(\mathcal{P})}$ on $P(\mathcal{P})$, we have that
$(\mathcal{Q}(\mathcal{P}), \{\cdot,\cdot\})$ is a
$\varphi$-twisted Lie subalgebra of $(\C, \{\cdot,\cdot\})$ which
will be called the subalgebra of the \emph{straightforwardly
quantizable observables of} $(M,\Lambda,\varphi)$. Obviously, if
$\varphi =0$, from the above definitions we obtain those given in
\cite{vai} for Poisson manifolds.

\vspace{2mm}

Now, in order to build a Hilbert space out of $\Gamma(K)$ on which
the quantum operators corresponding to the elements of
$\mathcal{Q}(\mathcal{P})$ act, we apply a classical method in
geometric quantization using the line bundle of complex
half-densities of $M$.

\vspace{2mm}

Let $\mathcal{D}$ be the \emph{half-density bundle} associated to
$TM$. It is well known (\cite{bw}, \cite{sbg}, \cite{ya}) that its
cross sections $\varrho$, called \emph{half-densities of} $M$, are
complex valued maps defined on the set $\mathcal{B}(TM)$ of basis of
$\Gamma(TM)$ such that, for any $x\in M$, $e_x\in \mathcal{B}(T_xM)$
and $A_x\in GL(T_xM)$,
$$
\varrho_x(e_xA_x)=\varrho_x(e_x)|\det A_x|^{1/2}.
$$
Since $GL(T_xM)$ acts transitively on $\mathcal{B}(T_xM)$,
$\varrho_x$ is determined by its value on a single basis of
$\Gamma(T_xM)$. As a result, we have that $\mathcal{D}$ is a
complex line bundle over $M$ which is defined by the transition
functions that are the square roots of the absolute values of the
Jacobians of the coordinate transformations $\tilde{x}_i =
\tilde{x}_i(x_j)$, i.e. $|\partial x_j /\partial
\tilde{x}_i|^{1/2}$. The Lie derivatives $\mathcal{L}$ of
$\varrho$ are defined as for tensors fields on $M$, (see,
\cite{ya}).

\vspace{2mm}

We assume that $(M,\Lambda,\varphi)$ is a prequantizable twisted
Poisson manifold. Let $\pi : K \to M$ be its prequantization bundle,
$h$ the Hermitian metric on $\pi : K \to M$ and $D$ a compatible
with $h$ contravariant derivative on $\pi : K \to M$ whose curvature
$C_D$ verifies (\ref{curv-Lambda}). Using the properties
(\ref{lin-D}), (\ref{rel-contr-der}) of $D$ and those of
$\mathcal{L}$, we can extend $D$ to a mapping, also denoted by $D$,
$$
D : \Gamma(T^*M\otimes \mathbb{C}) \to
\mathrm{End}_{\mathbb{C}}(\Gamma(K\otimes \mathcal{D}))
$$
by putting, for any $\alpha \in \Gamma(T^*M\otimes \mathbb{C})$
and $s\otimes \varrho \in \Gamma(K\otimes \mathcal{D})$,
\begin{equation}\label{ext-D}
D_{\alpha}(s\otimes \varrho) = D_{\alpha}s\otimes \varrho +
s\otimes \mathcal{L}_{\Lambda^{\sharp}(\alpha)}\varrho.
\end{equation}
Therefore, the representation $\,\widehat{}\, :
(\C,\{\cdot,\cdot\}) \to \mathrm{End}_{\mathbb{C}}(\Gamma(K))$
given by (\ref{def-repres}) can be extended to a representation of
$(\C,\{\cdot,\cdot\})$ on $\Gamma(K\otimes \mathcal{D})$, also
denoted by $\,\widehat{}\,$, by setting, for all $f\in \C$ and
$s\otimes \varrho \in \Gamma(K\otimes \mathcal{D})$,
\begin{equation}\label{ext-repres}
\hat{f}(s\otimes \varrho) = D_{df}(s\otimes \varrho) + 2\pi i f
(s\otimes \varrho).
\end{equation}
Because of (\ref{ext-D}), (\ref{ext-repres}) can be written as
\begin{equation}\label{ext-f}
\hat{f}(s\otimes \varrho) = (\hat{f}(s))\otimes \varrho + s\otimes
\mathcal{L}_{\Lambda^{\sharp}(df)}\varrho.
\end{equation}
Thus, taking into account (\ref{ext-f}), (\ref{homo-bracket}),
(\ref{def-Lie-br}), the property of the anchor map
$\Lambda^{\sharp}$ and that of the Lie derivative, we can easily
check that the prequantization condition (\ref{homo-bracket})
remains true, i.e., for any $f,g \in \C$ and $s\otimes \varrho \in
\Gamma(K\otimes \mathcal{D})$,
$$
\widehat{\{f,g\}}(s\otimes \varrho) =
[\hat{f},\hat{g}]^{\varphi}(s\otimes \varrho).
$$
Furthermore, applying (\ref{ext-repres}), (\ref{ext-D}),
(\ref{eq-curv}) and (\ref{curv-Lambda}), we deduce that
\begin{equation}\label{pr-D-ext}
D_{\alpha}(\hat{f}(s\otimes \varrho)) =
\hat{f}(D_{\alpha}(s\otimes \varrho)) -
D_{\{df,\alpha\}^{\varphi}}(s\otimes \varrho),
\end{equation}
for all $\alpha \in \Gamma(T^*M \otimes \mathbb{C})$, $f\in \C$
and $s\otimes \varrho \in \Gamma(K\otimes \mathcal{D})$.

Now, for a fixed polarization $\mathcal{P}$ of
$(M,\Lambda,\varphi)$, we consider the subset $\mathcal{H}_0$ of
$\Gamma(K\otimes \mathcal{D})$ given by
\begin{equation}\label{def-H0}
\mathcal{H}_0 = \{s\otimes \varrho \in \Gamma(K\otimes \mathcal{D})
\, / \, D_{\alpha}(s\otimes \varrho) = 0, \quad \mathrm{for}\;
\mathrm{all}\; \alpha \in \mathcal{P}\},
\end{equation}
and we assume that $\mathcal{H}_0 \neq \{0\}$, which is a
\emph{Bohr-Sommerfeld type condition} (see, \cite{sn}). We have
that, for any $f \in \mathcal{Q}(\mathcal{P})$ and $s\otimes \varrho
\in \mathcal{H}_0$, $\hat{f}(s\otimes \varrho)\in \mathcal{H}_0$. In
fact, for every $\alpha \in \mathcal{P}$, $\{df,\alpha
\}^{\varphi}\in \mathcal{P}$ and $D_{\alpha}(s\otimes \varrho)=0$.
Hence, according to (\ref{pr-D-ext}), we get
$$
D_{\alpha}(\hat{f}(s\otimes \varrho)) = \hat{f}(D_{\alpha}(s\otimes
\varrho)) - D_{\{df,\alpha\}^{\varphi}}(s\otimes \varrho) =
\hat{f}(0) - 0= 0,
$$
which means that $\hat{f}(s\otimes \varrho) \in \mathcal{H}_0$.
Consequently, $\hat{f}\vert_{\mathcal{H}_0} : \mathcal{H}_0 \to
\mathcal{H}_0$ is well defined for any $f\in
\mathcal{Q}(\mathcal{P})$. Thus, $\mathcal{H}_0 $ can be viewed as
a \emph{quantization space for} $\mathcal{Q}(\mathcal{P})$.

\vspace{2mm}

Next, we distinguish the following cases.

\vspace{2mm}

If $M$ is \emph{compact}, then, $\mathcal{H}_0$ equipped with the
inner product
\begin{equation}\label{inner}
\langle s_1\otimes \varrho_1, s_2\otimes \varrho_2 \rangle =
\int_M h(s_1,s_2)\varrho_1 \bar{\varrho}_2,
\end{equation}
$h$ being the Hermitian metric on $\pi : K \to M$ and bar denoting
the complex conjugation, is a \emph{pre-Hilbert space}. Moreover,
the operators $\hat{f}$ defined by (\ref{ext-repres}) or
(\ref{ext-f}) are anti-Hermitian with respect to (\ref{inner}).
This results as follows:
\begin{eqnarray}\label{anti-Herm}
\lefteqn{\langle \hat{f}(s_1\otimes \varrho_1), s_2\otimes
\varrho_2 \rangle + \langle s_1\otimes \varrho_1,
\hat{f}(s_2\otimes
\varrho_2) \rangle \stackrel{(\ref{ext-f})}{=}} \nonumber \\
& \langle (\hat{f}s_1)\otimes \varrho_1 + s_1\otimes
\mathcal{L}_{\Lambda^{\sharp}(df)}\varrho_1, s_2\otimes \varrho_2
\rangle + \langle s_1\otimes \varrho_1, (\hat{f}s_2)\otimes
\varrho_2 + s_2\otimes
\mathcal{L}_{\Lambda^{\sharp}(df)}\varrho_2\rangle \stackrel{(\ref{inner})}{=} \nonumber \\
\lefteqn{\int_M \Big( \big( h(\hat{f}s_1,s_2) +
h(s_1,\hat{f}s_2)\big)\varrho_1 \bar{\varrho}_2 + h(s_1,s_2)\big(
(\mathcal{L}_{\Lambda^{\sharp}(df)}\varrho_1)\bar{\varrho}_2 +
\varrho_1(\mathcal{L}_{\Lambda^{\sharp}(df)}\bar{\varrho}_2)
\big)\Big) \stackrel{(\ref{def-hermitian})}{=}} \nonumber \\
\lefteqn{\int_M \Big( \Lambda^{\sharp}(df)\big(
h(s_1,s_2)\big)\varrho_1 \bar{\varrho}_2 + h(s_1,s_2)\big(
(\mathcal{L}_{\Lambda^{\sharp}(df)}\varrho_1)\bar{\varrho}_2 +
\varrho_1(\mathcal{L}_{\Lambda^{\sharp}(df)}\bar{\varrho}_2)
\big)\Big) =} \nonumber \\
 \lefteqn{\int_M
\mathcal{L}_{\Lambda^{\sharp}(df)}\big(h(s_1,s_2)\varrho_1
\bar{\varrho}_2\big) = 0,}
\end{eqnarray}
where the last equality is true because of the density version of
Stokes' Theorem (\cite{vai-tor}, \cite{sbg}). If we require the
quantization space for $\mathcal{Q}(\mathcal{P})$ is a Hilbert
space, we take the completion $\mathcal{H}$ of $\mathcal{H}_0$. In
order to obtain Hermitian operators on $\mathcal{H}$, we prolong
$\hat{f}$ on $\mathcal{H}$ so that the obtained operators are
anti-Hermitian and then we multiple these by $i$. Then, condition
(\ref{homo-bracket}) is true up to the constant factor $i$.

\vspace{2mm}

If $M$ is \emph{not compact}, we consider the subalgebra
$\mathcal{P}_0$ of $(\Gamma(T^*M), \{\cdot , \cdot\}^{\varphi})$
whose complexification is $\mathcal{P}\cap \bar{\mathcal{P}}$ (so,
for all $\alpha, \beta \in \mathcal{P}_0$,
$\Lambda(\alpha,\beta)=0$) and we postulate
$\Lambda^{\sharp}(\mathcal{P}_0)$ to defines a regular foliation
$\mathcal{F}$ of $M$ whose the leaf space $N=M/\mathcal{F}$ is a
Hausdorff manifold. We can easily show that, for any $f\in
\mathcal{Q}(\mathcal{P})$ and $\alpha \in \mathcal{P}_0$,
$\{df,\alpha \}^{\varphi} \in \mathcal{P}_0$. Therefore, for any
$f\in \mathcal{Q}(\mathcal{P})$, the Hamiltonian vector field
$\Lambda^{\sharp}(df)$ is projectable with respect to
$\Lambda^{\sharp}(\mathcal{P}_0)$ onto $N$ (we have, for all
$\alpha \in \mathcal{P}_0$,
$[\Lambda^{\sharp}(df),\Lambda^{\sharp}(\alpha)]=\Lambda^{\sharp}(\{df,\alpha
\}^{\varphi})\in \Lambda^{\sharp}(\mathcal{P}_0)$). Also, if
$\varpi : M \to N$ denotes the canonical projection of $M$ onto
$N$, we have
\begin{eqnarray}\label{f-on-N}
\hat{f}(s\otimes \varpi^*\varrho_{_N}) & = &(\hat{f}s)\otimes
\varpi^*\varrho_{_N} + s\otimes
\mathcal{L}_{\Lambda^{\sharp}(df)}(\varpi^*\varrho_{_N}) \nonumber \\
& =& (\hat{f}s)\otimes \varpi^*\varrho_{_N} + s\otimes
\varpi^*(\mathcal{L}_{\varpi_*(\Lambda^{\sharp}(df))}\varrho_{_N}),
\end{eqnarray}
for all $s\in \Gamma(K)$ and $\varrho_{_N}$ a complex half-density
of $N$. The last equality permits us, instead of using arbitrary
half-densities of $M$ for the construction of $\mathcal{H}_0$, to
use $\mathcal{F}$-transversal half-densities of $M$ that are the
pull-back under $\varpi$ of half-densities of $N$. Then, for any
$\alpha \in \mathcal{P}_0$ and $\varrho_{_N}$ complex half-density
of $N$,
$\mathcal{L}_{\Lambda^{\sharp}(\alpha)}(\varpi^*\varrho_{_N})=0$.
Using this fact, (\ref{def-hermitian}), (\ref{ext-D}) and
(\ref{def-H0}), we have that, for all $s_1\otimes
\varpi^*\varrho_{1_N}, s_2\otimes \varpi^*\varrho_{2_N} \in
\mathcal{H}_0$ and $\alpha \in \mathcal{P}_0$,
$$
\mathcal{L}_{\Lambda^{\sharp}(\alpha)}\big(h(s_1,s_2)\varpi^*\varrho_{1_N}\varpi^*\bar{\varrho}_{2_N}
\big) = 0,
$$
which means that
$h(s_1,s_2)\varpi^*\varrho_{1_N}\varpi^*\bar{\varrho}_{2_N}$ can be
projected to a complex $1$-density $\delta_N$ of $N$ (the
multiplication of two half-densities yields a $1$-density). Hence,
$\mathcal{H}_0$ can be replaced by its subspace $\mathcal{H}_0^c$
formed by the sections that are projectable to $N$ and whose
projection has as support a compact subset of $N$. In general, we
may expect that $\mathcal{H}_0^c \neq \{0\}$. In this case,
$\mathcal{H}_0^c$ endowed with the inner product
$$
\langle s_1\otimes \varpi^*\varrho_{1_N}, s_2\otimes
\varpi^*\varrho_{2_N}\rangle = \int_N \delta_N
$$
is a pre-Hilbert space. Furthermore, working as in
(\ref{anti-Herm}), we prove that, for any $f\in
\mathcal{Q}(\mathcal{P})$, the corresponding operator $\hat{f}$
verifies
$$
\langle \hat{f}(s_1\otimes \varpi^*\varrho_{1_N}), s_2\otimes
\varpi^*\varrho_{2_N} \rangle + \langle s_1\otimes
\varpi^*\varrho_{1_N}, \hat{f}(s_2\otimes \varpi^*\varrho_{2_N})
\rangle = \int_N \mathcal{L}_{\Lambda^{\sharp}(df)}\delta_N = 0,
$$
whence we deduce the anti-Hermitian character of $\hat{f}$. In
order that the quantization space of $\mathcal{Q}(\mathcal{P})$ be
a Hilbert space and in order to obtain Hermitian operators on this
space, we proceed as in the compact case.

\subsection{Example}
Below, we will study the quantization of the prequantizable
twisted Poisson manifold $(M,\Lambda,\varphi)$ presented in
Example 3 of paragraph \ref{ex-prequan-tw-Poisson}.

We have $(M,\Lambda,\varphi) = (M_0\times \mathbb{R},\;
e^t(\Lambda_0 + \Lambda_0^{\sharp}(df)\wedge
\frac{\partial}{\partial t}),\; -e^{-t}\omega_0\wedge dt)$, where
$(M_0,\omega_0) = (M_0, \; d\alpha_0 -\alpha_0\wedge df)$ is a
symplectic manifold, with $\alpha_0\in \Gamma(T^*M_0)$ and $f\in
\C$, $\Lambda_0 = \Lambda_0^{\sharp}(\omega_0)$ and $t$ is the
canonical coordinate on $\mathbb{R}$. As we have seen, a solution
of (\ref{cond-quant}) is $(Z,\Phi) = (\partial / \partial t,
\,d(e^{-t}\alpha_0))$, therefore, the prequantization bundle of
$(M,\Lambda,\varphi)$ is the trivial complex line bundle $\pi :
M\times \mathbb{C} \to M$ equipped with the usual Hermitian metric
$h$ and the Hermitian contravariant derivative $D$ defined, for
any $\alpha \in \Gamma(T^*M)$ and $s\in \Gamma(M\times \mathbb{C})
= \c$, by
\begin{equation}\label{example-D}
D_{\alpha}s = \Lambda^{\sharp}(\alpha)s.
\end{equation}

We take $M_0 = \mathbb{R}^{2n}$, $n\geq 2$. Let $(x_1,x_2, \ldots,
x_{2n})$ be a local coordinates system of $M_0$ in which $\omega_0 =
d\alpha_0 -\alpha_0\wedge df$ has the Darboux's expression, i.e.,
$$
\omega_0 = \sum_{k=1}^n dx_{2k-1}\wedge dx_{2k}.
$$
Hence,
$$
\Lambda = e^t \big(\sum_{k=1}^n \frac{\partial}{\partial
x_{2k-1}}\wedge \frac{\partial}{\partial x_{2k}} + \sum_{k=1}^n
(\frac{\partial f}{\partial x_{2k-1}}\frac{\partial}{\partial
x_{2k}} - \frac{\partial f}{\partial
x_{2k}}\frac{\partial}{\partial x_{2k-1}})\wedge
\frac{\partial}{\partial t}\big)
$$
and
$$
\varphi = -e^{-t}\big(\sum_{k=1}^n dx_{2k-1}\wedge dx_{2k}\wedge
dt \big).
$$
Using the identifications $M = \mathbb{R}^{2n}\times \mathbb{R}\cong
\mathbb{C}^n \times \mathbb{R}$, $z_k = x_{2k-1} + i x_{2k}$ and
$\bar{z}_k = x_{2k-1} - i x_{2k}$, $k =1,\ldots, n$, which give us
$dx_{2k-1} = \frac{1}{2}(dz_k + d\bar{z}_k)$, $dx_{2k} =
-\frac{i}{2}(dz_k - d\bar{z}_k)$, $\frac{\partial}{\partial
x_{2k-1}} = \frac{\partial}{\partial z_k} + \frac{\partial}{\partial
\bar{z}_k}$ and $\frac{\partial}{\partial x_{2k}} =
i(\frac{\partial}{\partial z_k} - \frac{\partial}{\partial
\bar{z}_k})$, we obtain that, in the complex coordinates $(z_1,
\ldots, z_n, t)$ of $M$, the pair $(\Lambda, \varphi)$ is written as
follows:
$$
\Lambda = -2ie^t \big(\sum_{k=1}^n \frac{\partial}{\partial
z_k}\wedge \frac{\partial}{\partial \bar{z}_k} + \sum_{k=1}^n
(\frac{\partial f}{\partial z_k}\frac{\partial}{\partial
\bar{z}_k} - \frac{\partial f}{\partial
\bar{z}_k}\frac{\partial}{\partial z_k})\wedge
\frac{\partial}{\partial t}\big),
$$
$$
\varphi = -\frac{i}{2}e^{-t}\big(\sum_{k=1}^n dz_k\wedge
d\bar{z}_k\wedge dt \big).
$$
We observe that a convenient polarization of $(M,\Lambda,\varphi)$
is $\mathcal{P} = \mathrm{span}\{dz_1,\ldots,dz_n\}$. Then, the
set $P(\mathcal{P})$ consists of the functions $g\in \C$ for that
$\{dg, dz_k\}^{\varphi} \in \mathcal{P}$, for any $k = 1,
\ldots,n$. After a computation, we get that the coefficient of
$dt$ in $\{dg, dz_k\}^{\varphi}$ is annihilated. Thus, $\{dg,
dz_k\}^{\varphi} \in \mathcal{P}$ if, and only if, its
coefficients of $d\bar{z}_l$, $l = 1, \ldots,n$, are annihilated,
i.e.,
\begin{equation}\label{system-dif-eq-1}
\frac{\partial}{\partial \bar{z}_l}(- \frac{\partial g}{\partial
\bar{z}_k} + \frac{\partial f}{\partial \bar{z}_k}\frac{\partial
g}{\partial t}) + \frac{\partial g}{\partial
\bar{z}_l}\frac{\partial f}{\partial \bar{z}_k} - \frac{\partial
f}{\partial \bar{z}_k}\frac{\partial f}{\partial
\bar{z}_l}\frac{\partial g}{\partial t} = 0, \quad \quad \forall
\, l =1,\ldots,n.
\end{equation}
Now, we consider the set $\widetilde{P(\mathcal{P})}\subset
P(\mathcal{P})\times P(\mathcal{P})$ of the pairs $(g_1,g_2)$ of
different solutions of the system (\ref{system-dif-eq-1}) for that
$\{\varphi(\Lambda^{\sharp}(dg_1),\Lambda^{\sharp}(dg_2),\cdot),\,
dz_k\}^{\varphi} \in \mathcal{P}$, for any $k = 1, \ldots,n$, and
we take its projection $\mathcal{Q}(\mathcal{P})$ on
$P(\mathcal{P})$. The set $\mathcal{Q}(\mathcal{P})$ is the one of
straightforwardly quantizable observables of
$(M,\Lambda,\varphi)$. We note that a solution of
(\ref{system-dif-eq-1}) is $g_1 = f+t$. Since
$\Lambda^{\sharp}(dg_1)=0$,
$$
\{\varphi(\Lambda^{\sharp}(dg_1),\Lambda^{\sharp}(dg_2),\cdot),\,
dz_k\}^{\varphi} = \{\varphi(0,\Lambda^{\sharp}(dg_2),\cdot),\,
dz_k\}^{\varphi} =\{0,\, dz_k\}^{\varphi}=0 \in \mathcal{P},
$$
for any other $g_2 \in P(\mathcal{P})$ and any $dz_k$,
$k=1,\ldots,n$. So, $f+t \in \mathcal{Q}(\mathcal{P})$.

Next, we have to determine the corresponding quantization space
$\mathcal{H}_0$ for $\mathcal{Q}(\mathcal{P})$. The bundle
$\mathcal{D}$ of complex half-densities over $M=\mathbb{C}^n
\times \mathbb{R}$ is also trivial and it has a basis that can be
written formally as $\beta = |v|^{1/2}$, where
$$
v = dx_1 \wedge \ldots \wedge dx_{2n} \wedge dt = (\frac{i}{2})^n
dz_1 \wedge \ldots \wedge dz_n \wedge d\bar{z}_1 \wedge \ldots
\wedge d\bar{z}_n \wedge dt.
$$
Hence, taking $1$ as the unitary basis of $K=M\times \mathbb{C}$,
we have that any section $s\otimes \varrho$ of $K\otimes
\mathcal{D}$ can be written as $s\otimes \varrho = 1\otimes
(\chi\beta)$, where $\chi \in \c$. Let $D$ be the extension
(\ref{ext-D}) of the Hermitian contravariant derivative on $\pi :
K \to M$ given by (\ref{example-D}). Then, according to the
formula $\mathcal{L}_X\beta = \frac{1}{2}(\mathrm{div}X)\beta$
presented in \cite{vai-tor} (see, also \cite{ya}), we get
$$
D_{dz_k}(1\otimes (\chi\beta)) = 1\otimes
\mathcal{L}_{\Lambda^{\sharp}(dz_k)}(\chi\beta) = 1\otimes
(\mathcal{L}_{\Lambda^{\sharp}(dz_k)}\chi +
\frac{\chi}{2}\mathrm{div}\Lambda^{\sharp}(dz_k))\beta = 0
$$
if, and only if,
\begin{equation}\label{eq-chi}
\mathcal{L}_{\Lambda^{\sharp}(dz_k)}\chi +
\frac{\chi}{2}\mathrm{div}\Lambda^{\sharp}(dz_k) = 0.
\end{equation}
But,
$$
\Lambda^{\sharp}(dz_k) = -2ie^t(\frac{\partial}{\partial
\bar{z}_k}-\frac{\partial f}{\partial
\bar{z}_k}\frac{\partial}{\partial t}) \quad \quad \mathrm{and}
\quad \quad \mathrm{div}\Lambda^{\sharp}(dz_k) =
2ie^t\frac{\partial f}{\partial \bar{z}_k}.
$$
Thus, (\ref{eq-chi}) is equivalent to
$$
-\frac{\partial \chi}{\partial \bar{z}_k} + \frac{\partial
f}{\partial \bar{z}_k}\frac{\partial \chi}{\partial t} +
\frac{\chi}{2}\frac{\partial f}{\partial \bar{z}_k}=0,
$$
whose two solutions are the functions $\chi = e^{\frac{1}{2}f}$ and
$\chi = e^{\frac{1}{2}t}$. Consequently, the quantization space
$\mathcal{H}_0$ is
$$
\mathcal{H}_0 = \{1\otimes (\chi\beta)\in \Gamma(K\otimes
\mathcal{D}) \, / \, -\frac{\partial \chi}{\partial \bar{z}_k} +
\frac{\partial f}{\partial \bar{z}_k}\frac{\partial \chi}{\partial
t} + \frac{\chi}{2}\frac{\partial f}{\partial \bar{z}_k}=0, \;\;
\forall \, k=1, \ldots,n\} \neq \{0\}.
$$
For the elements of $\mathcal{H}_0$ and for $g\in
\mathcal{Q}(\mathcal{P})$, taking into account (\ref{def-repres})
and (\ref{ext-f}), we obtain the quantum operator
$$
\hat{g}(1\otimes (\chi\beta)) = \big(2\pi i g \chi +
\Lambda(dg,d\chi)+
\frac{\chi}{2}\mathrm{div}\Lambda^{\sharp}(dg)\big)(1\otimes
\beta).
$$
Furthermore, the inner product of two elements $1\otimes
(\chi_1\beta), 1\otimes (\chi_2\beta)$ of $\mathcal{H}_0$ with
compact support is
$$
\langle 1\otimes (\chi_1\beta), 1\otimes (\chi_2\beta)\rangle =
\int_M \chi_1\bar{\chi}_2v.
$$

\bigskip

\end{document}